\documentclass[a4paper,10pt]{article}
\usepackage[T1]{fontenc}
\usepackage[utf8]{inputenc}
\usepackage[english]{babel}
\usepackage{geometry}
	\geometry{paperheight=297mm,paperwidth=210mm,left=2.5cm,right=2.5cm}
	\geometry{top=2.8cm,headsep=1.2cm,bottom=2.5cm,heightrounded}
\usepackage{lipsum}
\usepackage[hidelinks]{hyperref}
\usepackage{xcolor}
\usepackage{microtype}
\usepackage{amsmath, amsthm, amssymb, mathtools}
\usepackage{siunitx}
\usepackage{resizegather}
	\addtolength{\jot}{4pt}
\usepackage{bm, bbm, mathrsfs}
\usepackage{pgfplots, pgfplotstable, booktabs, array}
	\pgfplotsset{compat=1.17}
	\pgfplotstableset{
		every head row/.append style=%
			{before row=\toprule, after row=\midrule},
		every last row/.append style=%
			{after row=\bottomrule}
	}
\usepackage{caption, subcaption}
	\captionsetup{tableposition=top,figureposition=bottom}
    \captionsetup{justification=centering,font=small,labelfont=bf}
\renewcommand{\vec}[1]{\bm{#1}}

\newcommand{\N}{\mathbb{N}}

\newcommand{\R}{\mathbb{R}}

\newcommand{\abs}[1]{\left\lvert#1\right\rvert}

\newcommand{\norm}[1]{\left\lVert#1\right\rVert}
\newcommand{\normbig}[1]{\bigl\lVert#1\bigr\rVert}
\newcommand{\normBig}[1]{\Bigl\lVert#1\Bigr\rVert}
\newcommand{\normbigg}[1]{\biggl\lVert#1\biggr\rVert}
\newcommand{\normBigg}[1]{\Biggl\lVert#1\Biggr\rVert}

\newcommand{\restrict}[1]{\hspace*{0.05em}
	\raisebox{-0.25ex}{\resizebox{!}{0.8\height}{$\mid$}${}_{#1}$}}
\newcommand{\deq}{\vcentcolon=}

\newcommand{\dx}{\, dx}
\newcommand{\dy}{\, dy}

\newcommand{\deta}{\, d\eta}
\newcommand{\dsigma}{\,d\sigma}

\DeclareMathOperator{\diver}{div}
\DeclareMathOperator{\diag}{diag}

\theoremstyle{plain}
\newtheorem{lemma}{Lemma}[section]
\newtheorem{theorem}[lemma]{Theorem}
\newtheorem{proposition}[lemma]{Proposition}

\theoremstyle{definition}
\newtheorem{definition}[lemma]{Definition}

\newtheorem{example}[lemma]{Example}
\allowdisplaybreaks
\newcommand{\Omegabar}{\overline{\Omega}}

\newcommand{\Fcal}{\mathcal{F}}
\newcommand{\Ical}{\mathcal{I}}
\newcommand{\Kcal}{\mathcal{K}}
\newcommand{\Lcal}{\mathcal{L}}
\newcommand{\Qcal}{\mathcal{Q}}
\newcommand{\Tcal}{\mathcal{T}}

\newcommand{\uhat}{\hat{u}}
\newcommand{\uhatvec}{\hat{\vec{u}}}

\newcommand{\h}{\vec{h}}
\newcommand{\w}{\vec{w}}

\begin{document}

\title{A decoupled meshless Nyström scheme for 2D
Fredholm\\ integral equations of the second kind
with smooth kernels}
\author{
Bruno Degli Esposti\thanks{Department of Information
    Engineering and Mathematics, University of Siena,
	Via Banchi di Sotto 55, 53100 Siena, Italy,
	\url{bruno.degliesposti@unifi.it}}, \hspace*{4pt}
Alessandra Sestini\thanks{Department of Mathematics
	and Computer Science, University of Florence,
	Viale Morgagni 67, 50134 Firenze, Italy,
	\url{alessandra.sestini@unifi.it}}
}
\date{July 23, 2026}
\maketitle

\begin{abstract}
The Nyström method for the numerical solution
of Fredholm integral equations of the
second kind is generalized by decoupling
the set of solution nodes
from the set of quadrature nodes.
The accuracy and efficiency of the new
method is investigated for smooth
kernels and complex 2D domains
using recently developed meshless moment-free
quadrature formulas on scattered nodes.
Compared to the classical Nyström method,
our variant has a clear performance
advantage, especially for narrow kernels.
The decoupled Nyström method requires
the choice of a reconstruction scheme
to approximate values at quadrature nodes
from values at solution nodes.
We prove that, under natural assumptions,
the overall order of convergence is the minimum
between that of the quadrature scheme
and of the reconstruction scheme.
When the two schemes have the same order,
we describe a simple procedure to determine experimentally
 a nearly optimal ratio between the
densities of solution and quadrature nodes,
for which our decoupled Nyström method achieves
its maximum efficiency.
\end{abstract}

\graphicspath{{./figures/}}

\section{Introduction}
\label{sec:introduction}
Fredholm Integral Equations (FIEs) of the second
kind frequently arise in modeling various phenomena,
such as fractures in continuum mechanics
\cite{silling2000reformulation},
light transport (radiance integral equation)
in computer graphics
\cite{kajiya1986rendering,keller1997instant},
microscale biological flows using the
method of regularized stokeslets
\cite{cortez2005method},
equilibrium states of nonlocal evolution
equations arising in materials science
\cite{Bates06,duncan2000coarsening}
and mathematical biology
\cite{murray2011mathematical1}, in particular modeling
population dispersal \cite{hutson2003evolution}
and pattern formation
\cite{kondo2017updated,murray2011mathematical2}.

For this reason and also for its intrinsic mathematical beauty,
the numerical treatment of FIEs has attracted considerable interest in
the literature for several decades, relying on two main kinds of
discretizations: projection methods such as collocation or Galerkin,
and the Nyström method; see \cite{atkinson1997numerical} for a general
and rigorous introduction to the topic. Both collocation
and Galerkin approaches require the preliminary selection of a
finite-dimensional space where the adopted orthogonal or interpolatory
operator defines a functional projection, see for example
\cite{Kulkarni25} for a recent introduction dealing with piecewise
polynomial spaces and also introducing the more effective modified
projection method in the 1D setting. In the same context, two recent
proposals also improve on the projection process, the first
relying on spline quasi-interpolation and focusing on weakly singular
kernels \cite{ALR24} and the second based on constrained
least squares mock-Chebyshev polynomials \cite{DMNO24}.

The popularity of the Nyström method is similar if not even greater
than that of projection methods, since it does not require
the preliminary selection of any approximating space,
while still producing a functional numerical solution of the FIE,
just relying on a suitable quadrature rule.
In particular, due to their effectiveness,
Gaussian quadrature rules are often considered
for formulating the 1D Nyström method, as well as their
tensor product extension to the multivariate setting when
the spatial domain is a hyperrectangle.
In this way high-order methods can also be defined.
An interesting recent variant that ensures an even further
improvement in the attainable accuracy has been considered
in \cite{BBCR22} for 1D domains, where some Gaussian
quadrature rules are proposed for low-degree spline spaces
with uniform breakpoints, together with a recursive algorithm
that generates the set of quadrature rule knots and weights
which need to be computed, being dependent on the
considered spline space.

Since much has already been done for the Nyström method
in the 1D setting, its 2D extension is attracting increasing
attention; see, for example, \cite{laguardia2023nystrom}
and references therein, and also \cite{mezzanotte24}, 
where several methods relying on radial basis functions are reviewed.
As an alternative, for general 2D domains,
we combine in this paper the Nyström method with the
high-order meshless quadrature formulas developed and analyzed
in \cite{davydov2025meshless}, see also the doctoral
thesis \cite{degliesposti2025domain}. We postpone
comparisons with existing meshless Nyström schemes
to Section~\ref{ssec:decoupled-meshless}, following an in-depth
explanation of our approach. The same section also
describes how sets of unstructured nodes can be generated
from a parametric description of the boundary of a 2D domain
with an advancing front method.

Our main contribution in this paper is the novel decoupling
of solution nodes and quadrature nodes in a Nyström scheme,
which we have developed in a high-order, meshless setting.
This generalization of the classical Nyström method requires
the introduction of a suitable \emph{reconstruction scheme},
and the error analysis of Theorem~\ref{theo:nystrom-decoupled}
justifies the interpretation of the overall numerical error
as the sum of two kinds of errors: a reconstruction error
that depends on the density of solution nodes, and
a quadrature error that depends on the density of quadrature nodes.
Especially for FIEs with narrow kernels, such as Gaussian kernels
with small standard deviation, the numerical solution can be
found much more efficiently by using a set of quadrature nodes
that is significantly finer than the set of solution nodes.
All of our theoretical results are supported by suitable
numerical experiments, which are
presented in Section~\ref{sec:numerical-experiments}.

Finally, it deserves to be mentioned that some boundary integral
equations derived as integral reformulation of elliptic
boundary value problems can be numerically treated by the Nyström
method, since they are also Fredholm integral equations,
as mentioned by the authors and their co-authors
in~\cite{IgA3D}, where the numerical treatment of 3D Helmholtz
problems is considered in the context of an Isogeometric
Boundary Element Method (IgA-BEM). However, in such
integral equations, the kernel is necessarily singular,
an assumption which goes beyond the scope of the present paper.
Indeed, specific quadrature rules suited for singular
and near-singular integrands are needed to achieve
satisfactory accuracy or even just convergence of the numerical scheme.
An approach in this direction is~\cite{Marussig16},
where boundary integral equations are considered on a
multi-patch isogeometric boundary representation of a 3D domain
and the regularization of singular integrals
is performed by suitable local corrections.

\graphicspath{{./figures/}}

\section{Fredholm integral equations of the second kind}
\label{sec:fie}
A \emph{Fredholm integral equation}
on a bounded domain $\Omega \subset \R^d$
is the integral equation
\begin{equation} \label{eq:fie}
\lambda u(x) - \int_{\Omega} k(x,y) u(y) \dy = f(x)
\end{equation}
for the unknown $u \colon \Omega \to \R$,
scalar parameter $\lambda \in \R$,
absolutely integrable kernel
$k \colon \Omega \times \Omega \to \R$
and right-hand side $f \colon \Omega \to \R$.
In this work, we assume with no exceptions
that $\lambda \neq 0$,
that $k \in C^0(\Omegabar \times \Omegabar)$,
and that $f$ is a nonzero function
in $C^0(\Omegabar)$. Under these assumptions,
equation \eqref{eq:fie} is called a \emph{nonhomogeneous
Fredholm integral equation of the second kind
with continuous kernel}, and its well-posedness
can be readily investigated using tools
from functional analysis.
Note that, even though the assumption of continuous kernel
is restrictive for some applications, all the mathematical
models cited in the introduction of this paper are based
on smooth kernels.

Now the space $C^0(\Omegabar)$ is a Banach space
with respect to the $\infty$-norm
\[
\norm{u}_\infty \deq \max_{x \in \Omegabar} \abs{u(x)},
\quad u \in C^0(\Omegabar),
\]
and it can be proved that integration against
the kernel $k$ defines a bounded linear operator as follows
\[
\Kcal \colon C^0(\Omegabar) \to C^0(\Omegabar), \quad
(\Kcal u)(x) \deq \int_{\Omega} k(x,y) u(y) \dy, \quad
\norm{\Kcal} = \max_{x \in \Omegabar}
    \int_\Omega \abs{k(x,y)} \dy.
\]
Then, using this notation, the  integral equation \eqref{eq:fie}
can be expressed in functional notation as
\[
(\lambda \Ical - \Kcal) u = f,
\]
with $\Ical$ being the identity operator on $C^0(\Omegabar)$.
Crucially, the image $\Kcal B$ of the closed unit ball
\[
B = \left\{ u \in C^0(\Omegabar)
\mid \norm{u}_\infty \leq 1 \right\}
\]
is not only bounded, but also uniformly equicontinuous.
By the Arzelà-Ascoli theorem
(see Proposition~\ref{prop:asc-arz} in
Appendix~\ref{sec:appendix-functional-analysis}), it follows
that $\Kcal B$ is relatively compact, and so
$\Kcal$ is a compact operator.
Then, the Fredholm alternative implies that exactly
one of the following statements must hold:
\begin{itemize}
\item There is a non-trivial solution
$u \in C^0(\Omegabar)$ to the eigenvalue problem
$\Kcal u = \lambda u$.
\item The operator $\lambda \Ical - \Kcal$ has
a bounded inverse.
\end{itemize}
In other words, the condition $\lambda \not\in \sigma(\Kcal)$,
with $\sigma(\Kcal)$ being the spectrum of the integral
operator $\Kcal$, is a necessary and sufficient condition
for equation \eqref{eq:fie} to be uniquely solvable
in the space $C^0(\Omegabar)$,
with $u$ depending continuously on $f$.
Since the spectral radius of $\Kcal$ is bounded
from above by the norm of $\Kcal$, a sufficient
and easily verifiable condition on $\lambda$
to ensure well-posedness of \eqref{eq:fie}
is $\abs{\lambda} > \norm{\Kcal}$.

For the sake of developing high-order
discretizations of \eqref{eq:fie}, however,
regularity of the solutions must be
established beyond simple continuity.
Let $C^q(\Omegabar)$ be the space of functions
over $\Omega$ whose partial derivatives up to
integer order $q \geq 1$ exist and can be
continuously extended to $\Omegabar$, see
Appendix~\ref{sec:appendix-functional-analysis}
for a rigorous definition.
Then, assuming appropriate regularity
of the kernel and right-hand side,
\[
k \in C^q(\Omegabar \times \Omegabar),
\quad f \in C^q(\Omegabar),
\]
the inclusion $u \in C^q(\Omegabar)$ can
be proved by differentiating \eqref{eq:fie}
under the integral sign, because $\lambda \neq 0$.

Since $C^q(\Omegabar)$ is also a Banach
space with respect to a suitable generalization
of the $\infty$-norm involving derivatives up
to order $q$ (see
Appendix~\ref{sec:appendix-functional-analysis}
for more details),
$\Kcal$ can be either understood as an
integral operator over $C^0(\Omegabar)$
or over $C^q(\Omegabar)$ with $q \geq 1$,
depending on context.
We conclude this section by defining some
properties of kernels.
A kernel $k \colon \Omega \times \Omega \to \R$
is said to be
\begin{itemize}
\item \emph{strictly positive}, if $k(x,y) > 0$
for all $x,y \in \Omega$, and \emph{nonnegative}
if $k(x,y) \geq 0$.
\item \emph{symmetric}, if $k(x,y) = k(y,x)$
for all $x,y \in \Omega$.
\item \emph{translation-invariant},
if there exists $J \colon \R^d \to \R$
such that $k(x,y) = J(x-y)$
for all $x,y \in \Omega$.
\item \emph{radial},
if there exists
$\varphi \colon \R^+ \to \R$
such that $k(x,y) = \varphi\bigl(\norm{x-y}\bigr)$
for all $x,y \in \Omega$.
\end{itemize}
Infinitely smooth, strictly positive, radial kernels
are the most common in applications,
with a prominent example being the
\emph{Gaussian kernel}
\begin{equation} \label{eq:gaussian}
k(x,y) = \bigl(\sigma \sqrt{2\pi}\bigr)^{-d}
\exp\bigl(-\norm{x-y}^2/(2\sigma^2)\bigr),
\quad x,y \in \R^d
\end{equation}
with standard deviation $\sigma > 0$.
The normalization factor
$\bigl(\sigma \sqrt{2\pi}\bigr)^{-d}$
ensures that
\[
\int_{\R^d} k(x,y) \dy = 1
\quad
\text{for all $x \in \R^d$ and all $\sigma > 0$}.
\]
An immediate consequence is that the operator
$\Kcal \colon C^0(\Omegabar) \to C^0(\Omegabar)$
associated with \eqref{eq:gaussian} satisfies
$\norm{\Kcal} \leq 1$, and so the Fredholm integral
equation \eqref{eq:fie} is well-posed for any $\lambda > 1$.

\graphicspath{{./figures/}}

\section{The Nyström method}
\label{sec:numerical-scheme}
The numerical solution of a nonhomogeneous
Fredholm integral equation of the second kind
with regular kernel such as \eqref{eq:fie}
can be computed using a wide range of techniques, see
\cite{atkinson1997numerical,hackbusch1995integral}
for classical treatments of the subject.
The main techniques are the degenerate kernel method,
projection methods (such as collocation and Galerkin),
and the Nyström method, also known as the \emph{quadrature}
method, because it amounts to discretizing the integral
in \eqref{eq:fie} directly with a quadrature formula.
In this paper, we present a generalization of Nyström's method
that allows the quadrature formula to be chosen independently
of the points where the exact solution is approximated,
and demonstrate its advantages in a multivariate, meshless setting.
Before introducing our generalization,
we briefly recall some key definitions
regarding quadrature schemes,
and revisit the classical Nyström method.

\subsection{Quadrature schemes and the classical Nyström method}
\label{ssec:classical-nystrom}
Let $\{(Y_h,\w_h)\}_{h>0}$ be
a \emph{quadrature scheme} on $\Omega$,
i.e.~a family of quadrature formulas with nodes
\[
Y_h = \bigl\{y_{h,i} \mid i = 1,\dots,\abs{Y_h}\bigr\}
\subset \Omegabar
\]
and weights $\w_h \in \R^{\abs{Y_h}}$
indexed by a parameter $h > 0$
such that $\lim_{h \to 0^+} \abs{Y_h} = \infty$.
A more precise definition of $h$ is not
needed to present and prove the main
theoretical results in this paper.
Instead, using $h$ as a general placeholder
makes our analysis applicable to
a wider range of situations.
Nevertheless, to give intuition for $h$
in practical contexts, we briefly review
several common definitions.

In the context of composite
quadrature formulas defined using a mesh
over $\Omegabar$, the parameter $h$
usually corresponds to the diameter of the largest
element in the mesh. When dealing with
meshless methods on scattered data,
$h$ is typically related to the fill distance
of the set $Y_h$, defined as the radius of
the largest ball centered in $\Omega$ which
does not include elements of $Y_h$.
Another common definition is that
of packing distance
$(\abs{\Omega}/\abs{Y_h})^{1/d}$.
Finally, $h$ could be simply an input parameter
of a node generation algorithm, such as the
advancing front method described in
Section~\ref{ssec:decoupled-meshless}.

Regardless of how $h$ is defined, the
convergence order and asymptotic stability
of a quadrature scheme can be characterized
as follows. For simplicity, we restrict
our attention to integer convergence orders.
\begin{definition} \label{Cw}
A quadrature scheme $\{(Y_h,\w_h)\}_{h>0}$
is said to be \emph{convergent with
order $q \in \N$, $q > 0$} if there exists a constant $C_w$
such that, for all $h > 0$,
\[
\abs{
    \int_{\Omega} v(x) \dx
    - \sum_{i=1}^{\abs{Y_h}} w_{h,i} v(y_{h,i})
}
\leq C_w \, h^q \norm{v}_{C^q(\Omegabar)}
\quad \text{for all $v \in C^q(\Omegabar)$}.
\]
\end{definition}
It can be proved that the nodes $Y_h$ of a convergent
quadrature scheme have to fill up $\Omega$ for smaller
and smaller values of $h$, in the sense that the fill
distance of $Y_h$ must go to zero for $h \to 0^+$.
\begin{definition} \label{CQ}
A quadrature scheme is said to be \emph{stable} if there exists a constant $C_Q$ such that
\[
\norm{\w_h}_1 \leq C_Q \quad \text{for all $h>0$}.
\]
\end{definition}
Note that a quadrature scheme convergent
for functions in $C^q(\Omegabar)$ might not
be convergent for functions of lower regularity.
In particular, it might not be convergent
for functions that are only continuous,
as shown by Example~\ref{ex:unstableqf}
in the appendix.
However, such counterexamples are mostly artificial
and purely of theoretical interest.
Unstable schemes are always avoided in
computational practice, for example because
they are unreliable for noisy data.
Since convergence for continuous functions
will be important in our theoretical analysis
of the Nyström method, we have included
in Appendix~\ref{sec:appendix-quadrature}
a proof that, for a quadrature scheme with
convergence order $q > 0$, stability
is equivalent to convergence
for continuous functions.

Moving on to the Nyström method,
discretizing the integral in \eqref{eq:fie}
with a quadrature scheme leads to
\begin{equation} \label{eq:semiscretization}
\lambda u_h(x)
- \sum_{i=1}^{\abs{Y_h}} w_{h,i} k(x,y_{h,i}) u_h(y_{h,i})
= f(x)
\qquad \text{for all $x \in \Omega$},
\end{equation}
a functional equation for the new unknown
$u_h \in C^q(\Omegabar)$, whose importance
will be explained later.
We can rewrite \eqref{eq:semiscretization}
more compactly as the semidiscrete operator equation
\begin{equation} \label{eq:semiOperator}
(\lambda \Ical - \Kcal_h) u_h = f,
\end{equation}
with $\Kcal_h$ being the discretization
of $\Kcal$ introduced by the chosen quadrature scheme:
\[
\Kcal_h \colon C^q(\Omegabar) \to C^q(\Omegabar)
\qquad (\Kcal_h v)(x) \deq
\sum_{i=1}^{\abs{Y_h}} w_{h,i} k(x,y_{h,i}) v(y_{h,i}).
\]
Imposing exactness of \eqref{eq:semiscretization}
at the discrete set of nodes $x \in Y_h$
leads to the square system of equations
\begin{equation} \label{eq:nystrom-classical-scalar}
\lambda \uhat_{h,i}
- \sum_{j=1}^{\abs{Y_h}} w_{h,j} k(y_{h,i},y_{h,j}) \uhat_{h,j}
= f(y_{h,i}), \qquad i = 1,\dots,\abs{Y_h},
\end{equation}
for the unknowns
$\uhat_{h,i}$,
which are meant to approximate the exact
values $u(y_{h,i})$. This is the
\emph{classical Nyström method}
for the numerical solution of \eqref{eq:fie}.
By introducing $I_h$ as the identity
matrix of size $\abs{Y_h} \times \abs{Y_h}$,
and the matrices
\[
(K_h)_{ij} = k(y_{h,i},y_{h,j}),
\quad W_h = \diag{(\w_h)},
\]
we can rewrite the linear system
\eqref{eq:nystrom-classical-scalar}
more compactly as
\begin{equation} \label{eq:nystrom-classical}
(\lambda I_h - K_h W_h) \uhatvec_h = f \restrict{Y_h}.
\end{equation}
A remarkable property of the Nyström method
is that, once the values of the solution
of \eqref{eq:semiscretization} are known
at all points in $Y_h$, the value at any other point
can be computed by the simple expression
\begin{equation} \label{Nistrominterp}
u_h(x) = \frac{1}{\lambda} \biggl(
\sum_{i=1}^{\abs{Y_h}} w_{h,i} k(x,y_{h,i})
u_h(y_{h,i}) + f(x) \biggr),
\end{equation}
known as \emph{Nyström interpolation formula}.
Since the values of $u_h$ at $Y_h$ must
satisfy the linear system
\eqref{eq:nystrom-classical},
the functional equation \eqref{eq:semiscretization}
has a unique solution $u_h$ if and only
if \eqref{eq:nystrom-classical}
has a unique solution $\uhatvec_h$,
in which case the identity
$u_h(y_{h,i}) = \uhat_{h,i}$
holds for all $i = 1,\dots,\abs{Y_h}$.

The existence of a unique solution
to system \eqref{eq:nystrom-classical}
and the overall convergence of the
Nyström interpolant $u_h$ to the
exact solution $u$ of \eqref{eq:fie}
can be proved under reasonable assumptions
on $\lambda$, on the quadrature scheme,
and on the smoothness of $k$ and $f$.
We recall here the classical approach
followed in~\cite{atkinson1997numerical},
which we have extended with a quantitative
estimate of the order of convergence.

\begin{proposition} \label{prop:nystrom-classical}
With reference to the classical Nyström
method \eqref{eq:nystrom-classical}
applied to the Fredholm integral equation
of the second kind \eqref{eq:fie}, suppose that
\begin{enumerate}
\item The kernel $k$ belongs to
$C^q(\Omegabar \times \Omegabar)$,
and the right-hand side
$f$ belongs to $C^q(\Omegabar)$.
\item The scalar parameter $\lambda$ does not
belong to the spectrum of
$\Kcal \colon C^0(\Omegabar) \to C^0(\Omegabar)$.
\item The quadrature scheme $\{(Y_h,\w_h)\}_{h>0}$
has order $q > 0$ and is stable.
\end{enumerate}
Then, for all sufficiently small $h > 0$,
the linear system \eqref{eq:nystrom-classical}
has a unique solution, the condition number
of the system matrix is bounded uniformly
with respect to $h$, and there exists a constant
$C > 0$ independent of $h$ and the exact solution
$u \in C^q(\Omegabar)$ such that
\[
\norm{u - u_h}_{\infty}
\leq C h^q \norm{u}_{C^q(\Omegabar)}.
\]
\end{proposition}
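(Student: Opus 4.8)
The plan is to follow the classical collectively compact operator approach of Atkinson, adding quantitative bookkeeping to extract the rate $h^q$. First I would establish that $\Kcal_h \to \Kcal$ pointwise on $C^q(\Omegabar)$ with a quantitative estimate: for fixed $v \in C^q(\Omegabar)$, the function $x \mapsto (\Kcal v - \Kcal_h v)(x)$ is the quadrature error for the integrand $y \mapsto k(x,y)v(y)$, which lies in $C^q(\Omegabar)$ with $C^q$-norm bounded by $\norm{k}_{C^q(\Omegabar\times\Omegabar)}\norm{v}_{C^q(\Omegabar)}$ up to a combinatorial constant (Leibniz rule); hence by Definition~\ref{Cw}, $\norm{(\Kcal - \Kcal_h)v}_\infty \leq C_w\, C\, h^q \norm{k}_{C^q}\norm{v}_{C^q}$. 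Next I would verify that $\{\Kcal_h\}$ is collectively compact and that $\norm{(\Kcal - \Kcal_h)\Kcal}$ and $\norm{(\Kcal - \Kcal_h)\Kcal_h}$ tend to zero: the key is that stability (Definition~\ref{CQ}) gives $\norm{\Kcal_h} \leq C_Q \norm{k}_\infty$ uniformly, and that the images $\Kcal_h B$ are uniformly bounded and equicontinuous (equicontinuity in $x$ comes from uniform continuity of $k$ together with $\norm{\w_h}_1 \leq C_Q$), so Arzelà--Ascoli applies; then $\norm{(\Kcal - \Kcal_h)\Kcal}_\infty = \sup_{\norm{v}\le 1}\norm{(\Kcal-\Kcal_h)(\Kcal v)}$, and since $\Kcal B$ is a fixed relatively compact set one upgrades the pointwise convergence above to uniform convergence on $\Kcal B$, giving $\norm{(\Kcal - \Kcal_h)\Kcal}_\infty \to 0$, similarly for $\Kcal_h$ in place of the second $\Kcal$ using collective compactness.

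With these ingredients, the standard perturbation lemma for collectively compact approximations (see Atkinson, Thm.~4.1.1) yields that $(\lambda\Ical - \Kcal_h)^{-1}$ exists on $C^q(\Omegabar)$ for all small $h$ and is uniformly bounded: writing $\Fcal_h \deq (\lambda\Ical - \Kcal)^{-1}$, one checks that $\lambda\Ical - \Kcal_h = (\lambda\Ical - \Kcal)\bigl(\Ical + \Fcal_h(\Kcal - \Kcal_h)\bigr)$ is invertible once $\norm{\Fcal_h(\Kcal-\Kcal_h)\Kcal_h}$ is small enough (this quantity, not $\norm{\Kcal - \Kcal_h}$ itself, is what goes to zero), and a Neumann-type bound gives $\norm{(\lambda\Ical - \Kcal_h)^{-1}} \leq M$ uniformly in $h$. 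Restricting to the nodes, this uniform invertibility transfers to the matrix $\lambda I_h - K_h W_h$ via the Nyström interpolation identity \eqref{Nistrominterp}: a solution of \eqref{eq:nystrom-classical} extends to a solution of the semidiscrete equation \eqref{eq:semiOperator}, so existence/uniqueness and a uniform bound on $\norm{(\lambda I_h - K_h W_h)^{-1}}$ (in the appropriate discrete $\infty$-norm, normalized so it is comparable to the operator norm on sampled functions) follow; combined with $\norm{\lambda I_h - K_h W_h} \leq \abs{\lambda} + C_Q\norm{k}_\infty$, this bounds the condition number uniformly in $h$.

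Finally, for the error estimate I would use the identity $u - u_h = (\lambda\Ical - \Kcal_h)^{-1}\bigl[(\lambda\Ical - \Kcal_h) - (\lambda\Ical - \Kcal)\bigr]u = (\lambda\Ical - \Kcal_h)^{-1}(\Kcal - \Kcal_h)u$, valid because $(\lambda\Ical-\Kcal)u = f = (\lambda\Ical - \Kcal_h)u_h$. Taking $\infty$-norms and using the uniform bound $M$ together with the quantitative consistency estimate applied to $v = u$ gives $\norm{u - u_h}_\infty \leq M\, C_w\, C\, h^q\, \norm{k}_{C^q(\Omegabar\times\Omegabar)}\,\norm{u}_{C^q(\Omegabar)} \leq C h^q \norm{u}_{C^q(\Omegabar)}$, as claimed; note here we use assumption~1 to guarantee $u \in C^q(\Omegabar)$ (by differentiating \eqref{eq:fie} under the integral sign, as observed in the excerpt). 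I expect the main obstacle to be the second step — proving $\norm{(\Kcal - \Kcal_h)\Kcal}_\infty \to 0$ and the analogous statement with $\Kcal_h$, i.e.\ the collective compactness argument — since pointwise convergence of $\Kcal_h$ does not imply norm convergence (indeed $\norm{\Kcal - \Kcal_h}_\infty$ typically does not vanish), and one must carefully exploit equicontinuity of $\Kcal B$ and $\bigcup_h \Kcal_h B$ to convert pointwise into uniform convergence over these compact sets; the rest is bookkeeping with the Leibniz rule and the resolvent identity.
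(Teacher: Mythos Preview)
Your proposal is correct and follows essentially the same route as the paper: establish the consistency bound $\norm{(\Kcal-\Kcal_h)u}_\infty \le C h^q \norm{u}_{C^q}$ via the Leibniz/submultiplicativity estimate, invoke Atkinson's collectively compact framework to get uniform boundedness of $(\lambda\Ical-\Kcal_h)^{-1}$, and combine these through the resolvent identity $u-u_h = (\lambda\Ical-\Kcal_h)^{-1}(\Kcal-\Kcal_h)u$. The only cosmetic difference is that the paper cites Atkinson's Theorem~4.1.2 as a black box (after using stability to upgrade order-$q$ convergence to $C^0$ convergence, their Proposition~\ref{prop:convcont}), whereas you unpack that theorem by verifying collective compactness and applying Theorem~4.1.1 directly; both arrive at the same uniform bound $\norm{(\lambda\Ical-\Kcal_h)^{-1}} \le C_s$ and the same final estimate.
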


\begin{proof}
By Proposition~\ref{prop:convcont} reported in Appendix~\ref{sec:appendix-quadrature},
the third assumption implies that the quadrature
scheme is convergent for all functions in $C^0(\Omegabar)$. This in turn makes
Theorem~4.1.2 of \cite{atkinson1997numerical}
applicable to our setting, from which it follows
that, for all sufficiently small $h > 0$,
the operator $(\lambda \Ical - \Kcal_h) \colon
C^0(\Omegabar) \rightarrow C^0(\Omegabar)$
is invertible and there exists
a constant $C_s$ independent of $h$ such that
\[
\norm{(\lambda \Ical - \Kcal_h)^{-1}} \leq C_s.
\]
Moreover, the linear
system~\eqref{eq:nystrom-classical} has a unique
solution. The analysis at the end of
Section~4.1.1 of \cite{atkinson1997numerical}
proves the stated result about the
condition number of the system matrix. Now,
\[
\Kcal u -\Kcal_h u
= \lambda u - f -\Kcal_h u
= \lambda u - \lambda u_h
    + \Kcal_h u_h -\Kcal_h u
= (\lambda \Ical - \Kcal_h)(u-u_h).
\]
Then, from the previous inequality,
we can derive the bound
\[
\Vert u - u_h \Vert_\infty
= \Vert (\lambda \Ical - \Kcal_h)^{-1} (\Kcal u -\Kcal_h u) \Vert_\infty
\le C_s \Vert (\Kcal  -\Kcal_h) u\Vert_\infty\,.
\]
By the third assumption and
Proposition~\ref{prop:submult} reported in Appendix~\ref{sec:appendix-functional-analysis},
there exists a constant $C_w$ such that
\[
\norm{(\Kcal  -\Kcal_h)u}_\infty
\leq \max_{x \in \Omegabar} \left\{
    C_w \, h^q
    \norm{k(x,\cdot) u(\cdot)}_{C^q(\Omegabar)}
    \right\}
\leq C_w \, h^q \norm{u}_{C^q(\Omegabar)}
    \max_{x \in \Omegabar} \left\{
    \norm{k(x,\cdot)}_{C^q(\Omegabar)} \right\}.
\]
Hence, setting
$C = C_s C_w \max_{x \in \Omegabar}
\norm{k(x,\cdot)}_{C^q(\Omegabar)}$
completes the proof.
\end{proof}

\subsection{A decoupled variant of the Nyström method}
\label{ssec:decoupled-nystrom}
Although imposing exactness of
\eqref{eq:semiscretization}
at the quadrature nodes $Y_h$ is a perfectly
natural choice, it has the undesirable consequence
of tying the size of the system matrix $A_h$
to $\abs{Y_h}$.
This means that, to avoid unreasonably large
linear systems and maintain a good ratio
of accuracy to computational effort compared
to other methods, the quadrature formulas must
converge very quickly as $\abs{Y_h} \to \infty$.

This is not a problem on one-dimensional domains,
where e.g.~Gauss–Legendre formulas can be used,
or on simple multivariate domains that can be
decomposed into a small number of hypercubes,
possibly composed with smooth mappings.
In that case, tensor-product formulas
can be easily constructed on each hypercube,
and an exponential rate of convergence
is achieved if the integrand is sufficiently smooth.

On more complex multivariate domains, however,
a significantly larger number of quadrature nodes
is typically needed to reach the same accuracy
as Gaussian formulas on hypercubes.
This happens, for example, in the case
of composite quadrature formulas on simplicial
meshes with a fixed algebraic rate of convergence,
like the scheme described in Section~5.2.3
of \cite{atkinson1997numerical}.
In such settings, it is harder for the Nyström method
to remain competitive with e.g.~collocation methods
using splines on triangulations,
under the assumption that the cost of solving
a dense linear system with $\abs{Y_h}$ unknowns
dominates the cost of constructing and evaluating
a quadrature formula of the same size.
This is one of the reasons behind recent efforts
to achieve a spectral rate of convergence
on curvilinear domains by combining
the algebraic quadrature formulas described
in~\cite{santin2011algebraic}
with the classical Nyström method,
see~\cite{laguardia2023nystrom}.

In the case of collocation or Galerkin methods,
the size of a (square) system matrix depends only on the
dimension of the functional space that is used to
approximate the exact solution (trial space),
and not on the quadrature formulas
used to approximate the integrals.
An important consequence in applications is that
the density of quadrature nodes can be
locally refined to handle
integration of singular kernels $k(x,y)$,
or kernels that, despite being smooth, have very
large derivatives, such as Gaussian kernels
\eqref{eq:gaussian} for small values of $\sigma$.
This approach is fundamentally incompatible with
the classical Nyström method: refinement of $Y_h$
would only generate more functions to be integrated,
each one centered around a different quadrature node in $Y_h$.

Overall, there are clear reasons why one might look
for a generalization of the Nyström method where
the size of the system matrix is decoupled from the
number of quadrature nodes.
To this aim, we introduce a new family of
nodes $\{X_h\}_{h>0}$, which we call
\emph{solution nodes},
and modify the classical Nyström method
by imposing exactness of the functional
equation \eqref{eq:semiscretization}
at the discrete set of solution nodes
$x_{h,i} \in X_h$ instead of quadrature nodes $Y_h$:
\begin{equation} \label{eq:nonsq}
\lambda u_h(x_{h,i})
- \sum_{j=1}^{\abs{Y_h}} w_{h,j}
    k(x_{h,i},y_{h,j}) u_h(y_{h,j})
= f(x_{h,i})
\qquad \text{for all $i = 1,\dots,\abs{X_h}$}.
\end{equation}
What we obtain, however, is not a
square linear system as before. Indeed, assuming
$X_h \neq Y_h$, the number of unknowns
$\abs{X_h \cup Y_h}$ is strictly larger than
the number of equations $\abs{X_h}$.
A natural way to bridge the gap is to
approximate the unknowns $u_h(Y_h)$
from the values $u_h(X_h)$ using some
interpolation or quasi-interpolation scheme.
For any family of matrices $\{R_h\}_{h>0}$
of size $\abs{Y_h} \times \abs{X_h}$ such that
\[
v_{|Y_h} \approx R_h v_{|X_h}
\quad \text{for all $v \in C^q(\Omegabar)$},
\]
we can use $R_h$ to eliminate the unknowns
$u_h(Y_h)$ from \eqref{eq:nonsq}
and obtain the square system
\begin{equation} \label{eq:nystrom-decoupled-scalar}
\lambda \uhat_{h,i}
- \sum_{j=1}^{\abs{Y_h}} w_{h,j} k(x_{h,i},y_{h,j})
  \sum_{k=1}^{\abs{X_h}} (R_h)_{jk} \uhat_{h,k}
= f(x_{h,i})
\qquad \text{for all $i = 1,\dots,\abs{X_h}$}.
\end{equation}
This is, at its core, the main idea
behind our new decoupled Nyström method.
Before moving on to a formal error analysis
of this method, however, we introduce a further
generalization by allowing the families
$\{X_h\}$ and $\{Y_h\}$
to be indexed by two separate
parameters $h_X > 0$ and $h_Y > 0$.
In this way, the effect
of refinement of solution nodes
can be studied separately from that
of quadrature nodes.

To keep notation as simple as possible,
all the $h$-indexed quantities introduced
so far will be replaced in the following
by $\h$-indexed ones, with $\h$ being
the vector of new parameters $(h_X,h_Y)$.
As explained at the beginning of
Section~\ref{ssec:classical-nystrom},
$h_X$ and $h_Y$ are just placeholders,
although they could be defined
more precisely as e.g.~the
fill distances of the sets of
solution nodes and quadrature nodes,
respectively. Although the notation
$X_{\h}$ might suggest that $X_{\h}$
also depends on $h_Y$, we stress that
$X_{\h}$ only depends on $h_X$, and
that $Y_{\h}$ only depends on $h_Y$.
Some quantities, however, such as
the matrices $R_{\h}$, genuinely
depend on both $h_X$ and $h_Y$,
as formalized by the following definition.

\begin{definition}
A \emph{reconstruction scheme} is
a family of triples
\[
\bigl\{ (X_{\h},Y_{\h},R_{\h}) \mid \h = (h_X,h_Y), 
\,\, h_X > 0, \, h_Y > 0 \bigr\},
\]
with $X_{\h}$ and $Y_{\h}$ being finite subsets
of $\Omegabar$, and $R_{\h}$ being real matrices
of size $\abs{Y_{\h}} \times \abs{X_{\h}}$.
\end{definition}

Concrete examples of reconstruction schemes will be given in
Section~\ref{ssec:meshless-quadrature-reconstruction},
with particular emphasis on techniques based
on radial basis function (RBF) interpolation.
The convergence order and asymptotic stability
of a reconstruction scheme are defined
similarly to quadrature schemes.

\begin{definition} \label{CR}
A reconstruction scheme
$\{(X_{\h},Y_{\h},R_{\h}) \mid h_X > 0, \, h_Y > 0\}$
is convergent with order $q>0$
if there exists a constant $C_R$ such that,
for all $h_X > 0$ and $h_Y > 0$,
\[
\normbig{v_{|Y_{\h}} - R_{\h} v_{|X_{\h}}}_{\infty}
\leq C_R \, h_X^q \normbig{v}_{C^q(\Omegabar)}
\quad \text{for all $v \in C^q(\Omegabar)$.}
\]
\end{definition}

\begin{definition} \label{CI}
A reconstruction scheme
$\{(X_{\h},Y_{\h},R_{\h}) \mid h_X > 0, \, h_Y > 0\}$
is stable if there exists a constant $C_I$
such that
\[
\norm{R_{\h}}_\infty \leq C_I
\quad \text{for all $h_X > 0$ and $h_Y > 0$}.
\]
\end{definition}

It can be proved that, if the fill distance of the
nodes $Y_{\h}$ of a convergent reconstruction scheme
goes to zero for $\h \to 0^+$, the fill distance
of nodes $X_{\h}$ must also converge to zero.

Going back to the decoupled Nyström method, we can rewrite
\eqref{eq:nystrom-decoupled-scalar} more compactly as
\begin{equation}
\label{eq:nystrom-decoupled}
A_{\h} \uhatvec_{\h}
\deq (\lambda I_{\h} - K_{\h} W_{\h} R_{\h})
\uhatvec_{\h}
= f \restrict{X_{\h}}
\end{equation}
by denoting the identity matrix of size
$\abs{X_{\h}} \times \abs{X_{\h}}$ as $I_{\h}$,
and introducing the matrices
\[
(K_{\h})_{ij} = k(x_{\h,i},y_{\h,j}),
\quad W_{\h} = \diag{(\w_{\h})}.
\]
Note that the kernel matrix $K_{\h}$ has size
$\abs{X_{\h}} \times \abs{Y_{\h}}$,
and the reconstruction matrix $R_{\h}$ has size
$\abs{Y_{\h}} \times \abs{X_{\h}}$,
so that the product $K_{\h} W_{\h} R_{\h}$
is a square matrix.
The size of the system matrix $A_{\h}$,
being equal to the number of solution nodes in $X_{\h}$,
is now decoupled from the number of quadrature
nodes in $Y_{\h}$.
This means that the value of $h_X$ relative to $h_Y$
is a free parameter that we can choose to improve
the efficiency of Nyström's method.

To make it easier to quantify the relative size
of the sets $X_{\h}$ and $Y_{\h}$, we assume
for the rest of the paper that the \emph{decoupling ratio}
$h_X/h_Y$ is a constant, which we denote by $\gamma$.
In this way, assuming that nodes in $X_{\h}$ and $Y_{\h}$
are distributed in a nearly uniform way with average spacing
$h_X$ and $h_Y$ between adjacent nodes,
\[
\lim_{h \to 0^+} \abs{Y_h}/\abs{X_h} = \gamma^d.
\]
Consider the $\h$-indexed family of linear operators
$\Kcal_{\h} \colon C^0(\Omegabar) \to C^0(\Omegabar)$
defined by
\begin{equation} \label{eq:Kh}
(\Kcal_{\h} v)(x)
= \sum_{i=1}^{\abs{Y_{\h}}} \w_{\h,i} k(x,y_{\h,i})
    \sum_{j=1}^{\abs{X_{\h}}} (R_{\h})_{ij} v(x_{\h,j})
= k(x,Y_{\h})^T W_{\h} R_{\h} v \restrict{X_{\h}}
\quad \text{for all $v \in C^0(\Omegabar),$}
\end{equation}
with $k(x,Y_{\h})$ being the column vector of values
$k(x,y_{\h,j})$ for $j = 1,\dots,\abs{Y_{\h}}$.
By the usual assumptions on the smoothness of $k$,
\eqref{eq:Kh} also defines a family of operators
$\Kcal_{\h} \colon C^q(\Omegabar) \to C^q(\Omegabar)$.
Just like in the case of the classical Nyström scheme,
the linear system \eqref{eq:nystrom-decoupled}
can be solved if and only if
the semidiscrete operator equation
$(\lambda \Ical - \Kcal_{\h}) u_{\h} = f$
can be solved in $C^0(\Omegabar)$.

\begin{proposition} \label{prop:interp-decoupled}
For any $k \in C^0(\Omegabar \times \Omegabar)$
and $f \in C^0(\Omegabar)$,
the linear system $(\lambda I_{\h} - K_{\h} W_{\h} R_{\h})
\uhatvec_{\h} = f \restrict{X_{\h}}$ has solutions
if and only if the operator equation
$(\lambda \Ical - \Kcal_{\h}) u_{\h} = f$
has solutions in $C^0(\Omegabar)$. Moreover,
if the two solution sets are not empty,
they have the same dimension as affine spaces.
\end{proposition}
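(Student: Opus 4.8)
The plan is to exhibit an explicit affine bijection between the solution set of the linear system \eqref{eq:nystrom-decoupled} and the solution set of the operator equation $(\lambda\Ical - \Kcal_{\h})u_{\h} = f$, built from a decoupled analogue of the Nyström interpolation formula \eqref{Nistrominterp}. For a vector $\vec{c} \in \R^{\abs{X_{\h}}}$ I would set
\[
(\Phi\vec{c})(x) \deq \frac{1}{\lambda}\Bigl(f(x) + k(x,Y_{\h})^T W_{\h} R_{\h}\vec{c}\Bigr), \qquad x \in \Omegabar ,
\]
which belongs to $C^q(\Omegabar)$ because $f$ and each map $x \mapsto k(x,y_{\h,j})$ do and $\lambda \neq 0$; the assignment $\vec{c} \mapsto \Phi\vec{c}$ is affine, being the sum of the constant function $\Phi\vec{0} = f/\lambda$ and a linear map. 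On the other side, the restriction $u_{\h} \mapsto u_{\h}\restrict{X_{\h}}$ is a linear map from $C^q(\Omegabar)$ to $\R^{\abs{X_{\h}}}$.

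First I would prove the forward direction: if $(\lambda\Ical - \Kcal_{\h})u_{\h} = f$, then by the formula \eqref{eq:Kh} for $\Kcal_{\h}$ one has $\lambda u_{\h}(x) = f(x) + k(x,Y_{\h})^T W_{\h} R_{\h}\, u_{\h}\restrict{X_{\h}}$ for every $x \in \Omegabar$, that is, $u_{\h} = \Phi\bigl(u_{\h}\restrict{X_{\h}}\bigr)$; evaluating this identity at each node of $X_{\h}$ and collecting the $\abs{X_{\h}}$ resulting equations in matrix form using $(K_{\h})_{ij} = k(x_{\h,i},y_{\h,j})$ gives exactly $(\lambda I_{\h} - K_{\h}W_{\h}R_{\h})\,u_{\h}\restrict{X_{\h}} = f\restrict{X_{\h}}$, so $\uhatvec_{\h} \deq u_{\h}\restrict{X_{\h}}$ solves \eqref{eq:nystrom-decoupled}. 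Conversely, given a solution $\uhatvec_{\h}$ of \eqref{eq:nystrom-decoupled}, I would set $u_{\h} \deq \Phi\uhatvec_{\h}$ and check two things: (a) evaluating $u_{\h}$ at the nodes of $X_{\h}$ and using that $\uhatvec_{\h}$ satisfies \eqref{eq:nystrom-decoupled} yields $u_{\h}\restrict{X_{\h}} = \uhatvec_{\h}$; (b) substituting (a) back into \eqref{eq:Kh} gives $(\Kcal_{\h}u_{\h})(x) = k(x,Y_{\h})^T W_{\h} R_{\h}\uhatvec_{\h}$, whence $\lambda u_{\h} - \Kcal_{\h}u_{\h} = f$. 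Together these computations show that the restriction map and $\Phi$ restrict to mutually inverse maps between the two solution sets; in particular one set is empty if and only if the other is.

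For the dimension claim, I would observe that, when nonempty, the solution set of \eqref{eq:nystrom-decoupled} is a coset of $\Ker(\lambda I_{\h} - K_{\h}W_{\h}R_{\h})$, hence a finite-dimensional affine subspace of $\R^{\abs{X_{\h}}}$; since $\Phi$ is affine and maps this set bijectively onto the solution set of $(\lambda\Ical - \Kcal_{\h})u_{\h} = f$, and an affine injection preserves the dimension of an affine space, the latter is an affine subspace of the same finite dimension.

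The argument is essentially bookkeeping; the one step that deserves care is (a) above — verifying that the function produced from a solution of the linear system restricts back to that very vector — since this is precisely where the linear system equation, and not merely the definition of $\Phi$, enters, and where one must keep the affine structure (coming from $f \neq 0$) distinct from a linear one.
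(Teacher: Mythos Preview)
Your proof is correct and follows essentially the same approach as the paper: you build the decoupled Nyström interpolation map $\Phi$, show that restriction to $X_{\h}$ and $\Phi$ are mutually inverse affine maps between the two solution sets, and conclude equality of dimensions. The paper's argument is identical in structure and in the key computations.
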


\begin{proof}
Let $u_{\h}$ be a continuous solution of the operator equation.
Then a solution of the linear system can be constructed
by evaluating $u_{\h}$ at the solution nodes $X_{\h}$:
\begin{gather*}
(\lambda \Ical - \Kcal_{\h}) u_{\h} = f
\quad
\lambda u_{\h}(x) - k(x,Y_{\h})^T W_{\h} R_{\h} u_{\h | X_{\h}} = f(x)
\quad
\lambda u_{\h | X_{\h}} - k(x,Y_{\h})^T_{|X_{\h}} W_{\h} R_{\h} u_{\h | X_{\h}} = f_{|X_{\h}} \\
\lambda u_{\h | X_{\h}} - K_{\h} W_{\h} R_{\h} u_{\h | X_{\h}} = f_{|X_{\h}}
\quad
(\lambda I_{\h} - K_{\h} W_{\h} R_{\h}) u_{\h | X_{\h}} = f \restrict{X_{\h}}.
\end{gather*}
Conversely, let $\uhatvec_{\h}$ be a solution of the
linear system, and define
\begin{equation} \label{eq:interp-decoupled}
u_{\h}(x) = \frac{1}{\lambda} \Bigl(
k(x,Y_{\h})^T W_{\h} R_{\h} \uhatvec_{\h} + f(x)
\Bigr).
\end{equation}
The regularity of $u_{\h}$ follows from the assumptions
on $k$ and $f$.
Evaluating $u_{\h}$ at $x_{\h,i}$ immediately proves
that $u_{\h}(x_{\h,i}) = \uhatvec_{\h,i}$.
The vector $\uhatvec_{\h}$ in \eqref{eq:interp-decoupled}
can hence be replaced by $u_{\h | X_{\h}}$,
which shows that $u_{\h}$ is a solution of the operator equation.

By the linearity of the system and of the operator equation,
the solution sets are affine spaces. They have the same
dimension because pointwise evaluation at
solution nodes $X_{\h}$ and \eqref{eq:interp-decoupled}
are both affine maps, and we have just showed that one
is the inverse of the other (when the domains of the
two maps are restricted to the solution sets).
\end{proof}

The expression \eqref{eq:interp-decoupled} generalizes
the \emph{Nyström interpolation formula}
\eqref{Nistrominterp} to our decoupled setting.
The importance of Proposition~\ref{prop:interp-decoupled}
is that the existence of solutions to the semidiscrete operator
equation $(\lambda \Ical - \Kcal_{\h}) u_{\h} = f$ can be
investigated using tools from functional analysis,
unlike the existence of solutions to the linear system
\eqref{eq:nystrom-decoupled}.
A powerful tool for the analysis of Nyström-like numerical
methods is the framework of collectively compact operator
approximations, see Section~4.1.2 of \cite{atkinson1997numerical}
for an introduction and the book \cite{anselone1971collectively}
for an extensive treatment of the subject.
For the reader's convenience, we recall here the key
definition of collectively compact family of operators.

\begin{definition}
Let $\Tcal$ be a family of bounded linear operators
over a Banach space $V$. We say that $\Tcal$
is \emph{collectively compact} if the set
\[
\bigl\{ Tv \mid T \in \Tcal, \, v \in V, \, \norm{v} \leq 1 \bigr\}
\]
is relatively compact, i.e.~has compact closure.
Equivalently, for every sequence
\[
(T_i,v_i) \subset \Tcal \times \{v \in V \mid \norm{v} \leq 1\},
\quad i \in \N,
\]
it must be possible to extract a subsequence $(T'_i,v'_i)$
such that $\{T'_i v'_i\}_{i \in \N}$ is convergent in $V$.
\end{definition}

Observe that, in the special case of a family $\Tcal = \{T\}$
consisting of a single operator $T$, this definition
is equivalent to compactness of $T$.

We can now prove that, under suitable assumptions
on the kernel, on the quadrature scheme, and on the
reconstruction scheme, the $\h$-indexed family
of linear operators $\Kcal_{\h} \colon C^0(\Omegabar) \to C^0(\Omegabar)$
is collectively compact. This property is stronger than individual compactness
of $\Kcal_{\h}$ for all $\h > 0$, and will play a key
role in our convergence analysis of the decoupled Nyström scheme.

\begin{lemma} \label{lemma:cc}
Let $k$ be a function in $C^0(\Omegabar \times \Omegabar)$,
let $\{(Y_{\h},\w_{\h})\}_{\h>0}$ be a stable quadrature scheme,
and let $\{(X_{\h},Y_{\h},R_{\h})\}_{\h>0}$ be a stable interpolation
scheme. Then, the $\h$-indexed family of linear operators
$\Kcal_{\h} \colon C^0(\Omegabar) \to C^0(\Omegabar)$
defined in \eqref{eq:Kh} is collectively compact.
\end{lemma}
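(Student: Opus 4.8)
The plan is to verify the equivalent sequential characterization of collective compactness: given an arbitrary sequence $(\Kcal_{\h_i}, v_i)$ with $\norm{v_i}_{C^q(\Omegabar)} \leq 1$, I must extract a subsequence along which $\Kcal_{\h_i} v_i$ converges in $C^0(\Omegabar)$. The natural route is the Arzelà–Ascoli theorem (the $q=0$ case of Proposition~\ref{prop:asc-arz-Cq} cited earlier): it suffices to show that the set $S = \{\Kcal_{\h} v \mid \h > 0,\, \norm{v}_{C^q} \leq 1\}$ is uniformly bounded and uniformly equicontinuous as a subset of $C^0(\Omegabar)$, since then $S$ is relatively compact and every sequence in $S$ has a convergent subsequence.

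First I would establish uniform boundedness. From the definition \eqref{eq:Kh}, for $x \in \Omegabar$ we have $\abs{(\Kcal_{\h} v)(x)} = \abs{k(x,Y_{\h})^T W_{\h} R_{\h} v\restrict{X_{\h}}} \leq \norm{k(x,\cdot)}_\infty \norm{W_{\h} R_{\h} v\restrict{X_{\h}}}_1$. Using $\norm{W_{\h} z}_1 \leq \norm{\w_{\h}}_1 \norm{z}_\infty$ and $\norm{R_{\h} z}_\infty \leq \norm{R_{\h}}_\infty \norm{z}_\infty$, together with stability of the quadrature scheme ($\norm{\w_{\h}}_1 \leq C_Q$), stability of the reconstruction scheme ($\norm{R_{\h}}_\infty \leq C_I$), and $\norm{v\restrict{X_{\h}}}_\infty \leq \norm{v}_\infty \leq \norm{v}_{C^q} \leq 1$, I get $\norm{\Kcal_{\h} v}_\infty \leq C_Q C_I \norm{k}_{C^0(\Omegabar\times\Omegabar)}$, a bound independent of $\h$ and $v$. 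Next, equicontinuity: for $x, x' \in \Omegabar$, the same chain of inequalities applied to $(\Kcal_{\h}v)(x) - (\Kcal_{\h}v)(x') = \bigl(k(x,Y_{\h}) - k(x',Y_{\h})\bigr)^T W_{\h} R_{\h} v\restrict{X_{\h}}$ yields $\abs{(\Kcal_{\h}v)(x) - (\Kcal_{\h}v)(x')} \leq C_Q C_I \sup_{y \in \Omegabar} \abs{k(x,y) - k(x',y)}$. Since $k$ is continuous on the compact set $\Omegabar \times \Omegabar$, it is uniformly continuous there, so the right-hand side tends to $0$ as $\abs{x - x'} \to 0$, uniformly in $\h$ and $v$. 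This is exactly uniform equicontinuity of $S$.

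Finally I would assemble the pieces: $S$ is a uniformly bounded, uniformly equicontinuous subset of $C^0(\Omegabar)$, hence relatively compact by Arzelà–Ascoli, which is the definition of collective compactness of the family $\{\Kcal_{\h}\}$. I would also remark that although $k \in C^q$ and the operators act on $C^q(\Omegabar)$, only the $C^0$ bound on $k$ and the $C^0$-norm control of $v$ enter the argument, so the smoothness hypothesis on $k$ is stronger than needed for this particular lemma (it is required elsewhere). I do not anticipate a serious obstacle here — the proof is essentially the classical argument showing that an integral operator with continuous kernel is compact, with the finite sum $\sum_i \w_{\h,i} k(\cdot, y_{\h,i})(\cdots)$ playing the role of the integral and the stability constants $C_Q, C_I$ replacing $\norm{\Kcal}$; the only mild care needed is bookkeeping the operator-norm inequalities $\norm{W_{\h} R_{\h}}_{\infty \to 1} \leq C_Q C_I$ correctly so that the $\h$-uniformity is transparent.
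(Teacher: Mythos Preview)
Your strategy---Arzelà--Ascoli plus the two stability bounds $\norm{\w_{\h}}_1 \leq C_Q$ and $\norm{R_{\h}}_\infty \leq C_I$---is exactly the paper's, and your estimates for uniform boundedness and equicontinuity are correct. However, there is a mismatch in the target space. The operators $\Kcal_{\h}$ are defined in \eqref{eq:Kh} as maps $C^q(\Omegabar) \to C^q(\Omegabar)$, so collective compactness means the image of the $C^q$-unit ball is relatively compact \emph{in $C^q(\Omegabar)$}, not merely in $C^0(\Omegabar)$. You only establish $C^0$-relative compactness: you check boundedness and equicontinuity of $\Kcal_{\h}v$ itself, but not of its derivatives. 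The paper instead invokes the $C^q$ version of Arzelà--Ascoli (Proposition~\ref{prop:asc-arz-Cq}) and runs your same chain of inequalities for every $\partial^\alpha \Kcal_{\h} v$ with $\abs{\alpha} \leq q$, replacing $k$ by $\partial^\alpha_x k$ throughout; this is where the hypothesis $k \in C^q(\Omegabar \times \Omegabar)$ is genuinely used.

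Consequently your closing remark---that the $C^q$ regularity of $k$ is ``stronger than needed for this particular lemma''---is not quite right: it is precisely what is needed to upgrade your $C^0$ argument to the $C^q$ statement being claimed. The fix is mechanical (repeat your two estimates with $\partial^\alpha_x k$ in place of $k$ and take the maximum over $\abs{\alpha}\leq q$), so the gap is one of scope rather than of idea.
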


\begin{proof}
We need to show the relative compactness of the set
\begin{equation} \label{eq:relcompactness}
\left\{ \Kcal_{\h} v \mid \h = (h_X,h_Y) > 0, \:
v \in C^0(\Omegabar) \text{ such that }
\norm{v}_{\infty} \leq 1 \right\}.
\end{equation}
Let $B$ denote the closed unit ball in $C^0(\Omegabar)$.
By the Arzelà-Ascoli theorem \ref{prop:asc-arz},
this amounts to checking two properties of the
elements of the set \eqref{eq:relcompactness}:
\begin{itemize}
\item Uniform boundedness, i.e.~there exists a constant
$C$ such that
\[
\norm{\Kcal_{\h} v}_\infty \leq C
\quad \text{for all $\h > 0$, $v \in B$}.
\]
\item Uniform equicontinuity, i.e.~for all $\varepsilon > 0$,
there exists $\delta > 0$ such that 
\[
\abs{ (\Kcal_{\h} v)(x)-(\Kcal_{\h} v)(x') } < \varepsilon
\quad \text{for all $\h > 0$, $v \in B$,
$x,x' \in \Omegabar$ such that $\norm{x-x'} < \delta$}.
\]
\end{itemize}

Let $C_Q$ and $C_I$ be the upper bounds on the stability
constants of the quadrature and interpolation schemes,
respectively, as in Definitions~\ref{CQ} and \ref{CI}.
For all $\h > 0$ and $v \in B$,
\begin{align} \label{eq:unifbound}
\norm{\Kcal_{\h} v}_\infty
&= \normBig{k(x,Y_{\h})^T W_{\h}
R_{\h} v \restrict{X_{\h}}}_{\infty,x \in \Omegabar}
= \normBigg{
    \sum_{i=1}^{\abs{Y_{\h}}}
    k(x,y_{\h,i}) \, w_{\h,i}
    \sum_{j=1}^{\abs{X_{\h}}} (R_{\h})_{ij}
    \, v(x_{\h,j})}_{\infty,x \in \Omegabar} \nonumber \\
&\leq \normBigg{
    \sum_{i=1}^{\abs{Y_{\h}}} \Big\lvert
    k(x,y_{\h,i}) \, w_{\h,i}
    \sum_{j=1}^{\abs{X_{\h}}} (R_{\h})_{ij}
    \, v(x_{\h,j}) \Big\rvert
    }_{\infty,x \in \Omegabar} \nonumber \\
&\leq \biggl( \sum_{i=1}^{\abs{Y_{\h}}}
    \abs{w_{\h,i}} \biggr) \normbigg{ \,
    \max_{i=1,\dots,|Y_{\h}|} \Bigl\lvert
    k(x,y_{\h,i})
    \sum_{j=1}^{\abs{X_{\h}}} (R_{\h})_{ij}
    \, v(x_{\h,j}) \Bigr\rvert
    }_{\infty,x \in \Omegabar} \\
&\leq C_Q \, \normbig{k}_{
    C^0(\Omegabar \times \Omegabar)} \,
    \normbig{R_{\h} \, v
    \restrict{X_{\h}}}_\infty \nonumber \\
&\leq C_Q \, \normbig{k}_{
    C^0(\Omegabar \times \Omegabar)} \,
    \normbig{R_{\h}}_\infty
    \normbig{v \restrict{X_{\h}}}_\infty
\leq C_Q \, C_I \, \normbig{k}_{
    C^0(\Omegabar \times \Omegabar)}. \nonumber
\end{align}

This chain of inequalities proves uniform boundedness.
Moving on to uniform equicontinuity, we begin by
observing that $k$ is uniformly continuous
on $\Omegabar \times \Omegabar$,
because it is continuous on a compact set.
This means that, for any $\hat{\varepsilon} > 0$,
there exists $\hat{\delta}(\hat{\varepsilon}) > 0$
such that
\[
\abs{k(x',y')-k(x,y)} < \hat{\varepsilon}
\quad \text{for all $x,x',y,y' \in \Omegabar$
such that $\norm{x'-x} + \norm{y'-y}
< \hat{\delta}(\hat{\varepsilon})$}.
\]
The notation $\hat{\delta}(\hat{\varepsilon})$
highlights the dependence of $\hat{\delta}$
on $\hat{\varepsilon}$. Fix $\varepsilon > 0$.
We seek $\delta > 0$ such that
\[
\abs{
  k(x, Y_{\h})^T W_{\h} R_{\h} v \restrict{X_{\h}}
- k(x',Y_{\h})^T W_{\h} R_{\h} v \restrict{X_{\h}}
} < \varepsilon
\]
holds for all $\h > 0$, for all $v \in B$,
and for all $x,x' \in \Omegabar$ such that
$\norm{x-x'} < \delta$. This is the case for
\[
\delta = \hat{\delta}(C_Q^{-1} C_I^{-1} \varepsilon),
\]
as shown by the following estimate (steps
in common with \eqref{eq:unifbound} are
omitted for brevity):
\begin{gather*}
\abs{ (\Kcal_{\h} v)(x) - (\Kcal_{\h} v)(x') }
\leq \abs{\Bigl( k(x,Y_{\h})^T -k(x',Y_{\h})^T \Bigr)
    W_{\h} R_{\h} v \restrict{X_{\h}}} \leq \\
\leq \max_{i=1,\dots,\abs{Y_{\h}}}
    \abs{k(x,y_{\h,i})^T - k(x',y_{\h,i})^T}
    \norm{w_{\h}}_1 \normbig{R_{\h}}_\infty
    \normbig{v \restrict{X_{\h}}}_\infty
< C_Q^{-1} C_I^{-1} \varepsilon \, C_Q C_I = \varepsilon. \qedhere
\end{gather*}
\end{proof}

We are now ready to state and prove the error analysis
of our decoupled Nyström scheme, which is the main theoretical
result in this paper.

\begin{theorem} \label{theo:nystrom-decoupled}
Consider the decoupled Nyström scheme \eqref{eq:nystrom-decoupled}
for the numerical solution of the Fredholm integral equation
of the second kind $(\lambda \Ical - \Kcal) u = f$,
and suppose that
\begin{enumerate}
\item The function $f$ belongs to $C^q(\Omegabar)$,
and the kernel $k$ defining $\Kcal$ belongs to $C^q(\Omegabar \times \Omegabar)$.
\item The scalar $\lambda$ does not belong to
the spectrum of $\Kcal$.
\item The quadrature scheme
$\{(Y_{\h},\w_{\h})\}_{\h>0}$ is stable and
has convergence order $q_w$, with $0 < q_w \leq q$.
\item The interpolation scheme
$\{(X_{\h},Y_{\h},R_{\h})\}_{\h>0}$
is stable and has convergence order $q_R$,
with $0 < q_R \leq q$.
\end{enumerate}
Then, for a sufficiently small pair $\h = (h_X,h_Y)$,
with $h_X,h_Y > 0$, the linear system
\begin{equation} \label{eq:nystrom-decoupled-theo}
A_{\h} \uhatvec_{\h}
= (\lambda I_{\h} - K_{\h} W_{\h} R_{\h}) \uhatvec_{\h}
= f \restrict{X_{\h}}
\end{equation}
has a unique solution, and the condition number of $A_{\h}$
in the $\infty$-norm is bounded independently of $\h$.
Moreover, denoting the Nyström interpolant~\eqref{eq:interp-decoupled}
by $u_{\h}$, there exists $C_s > 0$ such that
\begin{equation} \label{eq:errestdec}
\norm{u - u_{\h}}_{\infty}
\leq C_s \left(
C_w \norm{\norm{k(x,y)}_{C^{q_w}(\Omegabar), y}}_{\infty, x}
    \norm{u}_{C^{q_w}(\Omegabar)} h_Y^{q_w}
+ C_Q C_R \norm{k}_{C^0(\Omegabar \times \Omegabar)}
    \norm{u}_{C^{q_R}(\Omegabar)} h_X^{q_R} \right),
\end{equation}
with $C_w,C_Q,C_R$ being the constants in the definitions
of stable and convergent quadrature and reconstruction schemes,
see Definitions~\ref{Cw}, \ref{CQ}, \ref{CR}.
\end{theorem}

\begin{proof}
The main idea is to use the framework of collectively
compact operator approximations to show that the
$\h$-indexed operators
\begin{equation} \label{eq:likcal}
\lambda \Ical - \Kcal_{\h} \colon C^0(\Omegabar) \to C^0(\Omegabar),
\quad \h = (h_X,h_Y) > 0
\end{equation}
are invertible for sufficiently small $\h$,
and that the norm of the inverse operators can be bounded
uniformly with respect to $\h$. Once these properties have been
established, the connection between \eqref{eq:likcal}
and the linear system \eqref{eq:nystrom-decoupled-theo}
is given by Proposition~\ref{prop:interp-decoupled},
and the rest of the proof goes through easily.

By the Fredholm alternative and the assumptions on $\lambda$,
$k$, $f$, the operator
$\lambda \Ical - \Kcal \colon C^0(\Omegabar) \to C^0(\Omegabar)$
is invertible, and there exists a unique solution
$u \in C^q(\Omegabar)$ to the integral equation $(\lambda \Ical - \Kcal) u = f$.
The regularity lifting from $C^0(\Omegabar)$ to $C^q(\Omegabar)$
has already been discussed in Section~\ref{sec:fie}.
The algebraic identity
\[
\lambda \Ical - \Kcal_{\h}
= (\lambda \Ical - \Kcal)
\Bigl( \Ical + (\lambda\Ical - \Kcal)^{-1} (\Kcal-\Kcal_{\h}) \Bigr)
\]
suggests proving that $\norm{\Kcal-\Kcal_{\h}}_{C^0(\Omegabar)} \to 0$
for $\h \to 0$, so that
\[
\Ical + (\lambda\Ical - \Kcal)^{-1} (\Kcal-\Kcal_{\h}) \to \Ical,
\]
which would in turn imply that $\lambda \Ical - \Kcal_{\h}$ is the
composition of two invertible operators for sufficiently small $\h$.
This approach, which is routinely followed in the analysis of collocation
and Galerkin methods, cannot work for Nyström-like methods since one can prove that
\[
\lim_{\h \to 0^+} \norm{\Kcal-\Kcal_{\h}}_{C^0(\Omegabar)} > 0.
\]
Using the framework of collectively compact operators,
however, one can establish the weaker result
\begin{equation} \label{eq:weaker-conv-Kcal}
\lim_{\h \to 0^+} \norm{(\Kcal-\Kcal_{\h}) \Kcal_{\h}}_{C^0(\Omegabar)} = 0,
\end{equation}
which is nevertheless sufficient to prove that
$\lambda \Ical - \Kcal_{\h}$ is invertible
for sufficiently small $\h$ using functional analytic methods.
Lemma~4.1.2 in \cite{atkinson1997numerical} states that
\eqref{eq:weaker-conv-Kcal} holds under three assumptions:
\begin{description}
\item[P1.] $\Kcal$ and all elements of $\{\Kcal_{\h}\}_{\h > 0}$
are linear operators from $C^0(\Omegabar)$ to $C^0(\Omegabar)$.
\item[P2.] $\lim_{\h \to 0} \norm{\Kcal v - \Kcal_{\h} v}_{\infty} = 0$
for all $v \in C^0(\Omegabar)$.
\item[P3.] The family $\{\Kcal_{\h}\}_{\h > 0}$ is collectively compact.
\end{description}
The lemma is actually stated for sequence-indexed families
$\{T_i\}_{i \in \N}$ and not for a continuous two-parameter
index such as $\h = (h_X,h_Y)$, but the standard
sequential-criterion reduction (testing every sequence
$\h_i \to 0$) extends it to our setting without further change.
P1 holds by definition, and P3 follows from
Lemma~\ref{lemma:cc}. As for P2, we rely on a density argument.
The density of $C^q(\Omegabar)$ in $C^0(\Omegabar)$ is
proved by a classical mollifier argument, which we recall
in Lemma~\ref{lemma:density}, and the uniform boundedness of
$\{\Kcal_{\h}\}_{\h > 0}$ is part of
Lemma~\ref{lemma:cc}.
Let $C_r$ be a constant such that
\[
\norm{\Kcal_{\h}}_{C^0(\Omegabar)} \leq C_r
\quad \text{for all $\h > 0$.}
\]
By Proposition~\ref{prop:pointwise-conv-Cq}, assumption P2
only needs to be checked for all $v \in C^q(\Omegabar)$
instead of $v \in C^0(\Omegabar)$.
In this way, $v$ is sufficiently regular to ensure convergence
of the quadrature and reconstruction schemes. The following
estimate not only proves P2 for $v \in C^q(\Omegabar)$,
but will also lead to the inequality~\eqref{eq:errestdec},
and is therefore one of the most important steps in the whole proof.
By the assumptions on the order of convergence of the quadrature
and reconstruction schemes, and the submultiplicativity of
$\norm{\cdot}_{C^q(\Omegabar)}$ proved in Proposition~\ref{prop:submult},
\begin{align*}
\norm{\Kcal v - \Kcal_{\h} v}_\infty
&= \norm{\int_\Omega k(x,y) v(y) \dy
    - k(x,Y_{\h})^T W_{\h} R_{\h} v \restrict{X_{\h}}}_{\infty,x} \\
&\leq \normbigg{\int_\Omega k(x,y) v(y) \dy
    - k(x,Y_{\h})^T W_{\h} v \restrict{Y_{\h}}}_{\infty,x}
+ \normbigg{k(x,Y_{\h})^T W_{\h} (v \restrict{Y_{\h}}
    - R_{\h} v \restrict{X_{\h}})}_{\infty,x} \\
&\leq \norm{ C_w h_Y^{q_w}
    \norm{k(x,y)v(y)}_{C^{q_w}(\Omegabar), y}
    }_{\infty,x}
+ \normBig{
\sum_{i=1}^{\abs{Y_{\h}}}
    k(x,y_{\h,i}) \, w_{\h,i}
    \Bigl( v(y_{\h,i})-
    \sum_{j=1}^{\abs{X_{\h}}} (R_{\h})_{ij}
    \, v(x_{\h,j}) \Bigr)
}_{\infty,x} \\
&\leq C_w h_Y^{q_w} \norm{
    \norm{k(x,y)}_{C^{q_w}(\Omegabar), y}}_{\infty, x}
    \norm{v}_{C^{q_w}(\Omegabar)}
+ \norm{w_{\h}}_1 \norm{\norm{k(x,y)}_{\infty,y}}_{\infty,x}
 \norm{v \restrict{Y_{\h}} - R_{\h} v \restrict{X_{\h}}}_{\infty} \\
&\leq C_w h_Y^{q_w} \norm{
    \norm{k(x,y)}_{C^{q_w}(\Omegabar), y}}_{\infty, x}
    \norm{v}_{C^{q_w}(\Omegabar)}
+ C_Q \norm{k}_{C^0(\Omegabar \times \Omegabar)}
    C_R h_X^{q_R} \norm{v}_{C^{q_R}(\Omegabar)}.
\end{align*}
Since $q_w,q_R > 0$, it follows that
$\lim_{\h \to 0^+} \norm{ \Kcal v - \Kcal_{\h} v }_\infty = 0$
for all $v \in C^q(\Omegabar)$, as required.

Now that P1-P3 have been checked, Lemma~4.1.2 in
\cite{atkinson1997numerical} establishes the
limit~\eqref{eq:weaker-conv-Kcal}. In turn,
by Theorem~4.1.1 in the same book, any sufficiently small
$\h = (h_X,h_Y) > 0$ for which
\[
\norm{(\Kcal-\Kcal_{\h}) \Kcal_{\h}}_{C^0(\Omegabar)}
< \frac{\abs{\lambda}}{\norm{(\lambda \Ical - \Kcal)^{-1}}_{C^0(\Omegabar)}}
\]
ensures that $\lambda \Ical - \Kcal_{\h}$ is invertible,
and keep $\norm{(\lambda \Ical - \Kcal_{\h})^{-1}}_{C^0(\Omegabar)}$
uniformly bounded with respect to the pair of discretization
parameters $\h = (h_X,h_Y)$. Let $C_s$ be a constant such that
\[
\norm{(\lambda \Ical - \Kcal_{\h})^{-1}}_{C^0(\Omegabar)} \leq C_s
\quad \text{for sufficiently small $\h > 0$.}
\]
We have proved that, for sufficiently small $\h$,
the operator equation $(\lambda \Ical - \Kcal_{\h}) u_{\h} = f$
has a unique solution $u_{\h} \in C^0(\Omegabar)$ and so,
by Proposition~\ref{prop:interp-decoupled}, the linear
system~\eqref{eq:nystrom-decoupled-theo} also has a unique
solution $\uhatvec_{\h}$. Moreover, the Nyström interpolation
formula~\eqref{eq:interp-decoupled} constructed from $\uhatvec_{\h}$
must coincide with the solution $u_{\h}$,
and so $u_{\h} \in C^q(\Omegabar)$.
Another consequence of Theorem~4.1.1 in \cite{atkinson1997numerical}
is that
\begin{align*}
\norm{u-u_{\h}}_{\infty}
&\leq \norm{(\lambda \Ical - \Kcal_{\h})^{-1}}_{C^0(\Omegabar)}
    \norm{ \Kcal u - \Kcal_{\h} u }_\infty \\
&\leq C_s \Bigl( C_w h_Y^{q_w} \norm{
    \norm{k(x,y)}_{C^{q_w}(\Omegabar), y}}_{\infty, x}
    \norm{u}_{C^{q_w}(\Omegabar)}
    + C_Q \norm{k}_{C^0(\Omegabar \times \Omegabar)}
    C_R h_X^{q_R} \norm{u}_{C^{q_R}(\Omegabar)} \Bigr),
\end{align*}
which proves~\eqref{eq:errestdec}.
Finally, the analysis at the end of Section~4.1.1
of \cite{atkinson1997numerical} applies with minimal
changes to our setting and establishes that
\[
\kappa_{\infty}(A_{\h})
\deq \normbig{A_{\h}}_\infty \normbig{A_{\h}^{-1}}_\infty
\leq \norm{\lambda \Ical - \Kcal_{\h}}_{C^0(\Omegabar)}
    \norm{(\lambda \Ical - \Kcal_{\h})^{-1}}_{C^0(\Omegabar)}
\leq (\abs{\lambda}+C_r) C_s. \qedhere
\]
\end{proof}

The error estimate~\eqref{eq:errestdec} can be understood
as the sum of two parts: a \emph{quadrature term} that scales as
$h_Y^{q_w}$, and a \emph{reconstruction term} that scales as $h_X^{q_R}$.
Remarkably, the kernel-dependent constants in the two terms are
unbalanced: the reconstruction term is proportional to
$\norm{k}_{C^0(\Omegabar \times \Omegabar)}$,
whereas the quadrature term is proportional to the potentially
much larger value
\[
\norm{\norm{k(x,y)}_{C^{q_w}(\Omegabar), y}}_{\infty, x}
\]
which involves the $\infty$-norms of mixed partial derivatives
of $k$ up to the order $q_w$. Especially for narrow kernels
and moderately high convergence orders such as $q_w = 5$,
the ratio
\begin{equation} \label{eq:k-ratio}
\frac{\norm{\norm{k(x,y)}_{C^{q_w}(\Omegabar), y}}_{\infty, x}}
{\norm{k}_{C^0(\Omegabar \times \Omegabar)}}
\end{equation}
can reach several orders of magnitude in size. A rigorous analysis
of \eqref{eq:k-ratio} as a function of $q_w$ and $\sigma$
for Gaussian kernels \eqref{eq:gaussian} would provide valuable
insight, however we consider it to be beyond the scope
of the current paper.

Even though the error estimate~\eqref{eq:errestdec}
is not guaranteed to be tight, one can try to choose
free parameters in a decoupled Nyström scheme
(such as the decoupling ratio $\gamma = h_X/h_Y$)
according to a balancing principle:
the contribution to the overall error of
the quadrature term should match that of the reconstruction term.
In this way, time will not be wasted
computing quadrature formulas whose accuracy is bottlenecked
by the reconstruction of values at $Y_{\h}$, and, likewise,
time will not be wasted with reconstructions
whose accuracy is bottlenecked by numerical integration.

Then, to ensure balance,
the decoupling ratio $\gamma = h_X/h_Y$ should be
\begin{equation} \label{eq:gamma-star}
\gamma^*
\deq \left( \frac{C_w \norm{\norm{k(x,y)}_{C^{q}(\Omegabar), y}}_{\infty, x}}
{C_Q C_R \norm{k}_{C^0(\Omegabar \times \Omegabar)}}
\right)^{1/q}
\quad \text{assuming $q_w = q_R = q$.}
\end{equation}
Optimal constants $C_w,C_Q,C_R$ are typically unknown
a priori, and \eqref{eq:k-ratio} might be hard to estimate
for arbitrary $k$ and $\Omegabar$, so this choice of
$\gamma$ is purely of theoretical interest.
Crucially, however, $\gamma^*$ does not depend on the size
of the discretization parameters $h_X$ and $h_Y$.
This independence suggests a practical strategy:
find a nearly optimal value of $\gamma$ via trial and error
on a coarse set of nodes, and then use the same decoupling
ratio for computationally intensive simulations on finer sets.
This is the approach that will be followed in
Section~\ref{sec:numerical-experiments} of this paper,
devoted to numerical experiments.

\subsection{The decoupled Nyström method in a meshless setting}
\label{ssec:decoupled-meshless}
The decoupled Nyström method has been formulated and analyzed
for general sets of solution nodes $X_{\h}$ and quadrature nodes $Y_{\h}$,
with no assumptions on their distribution aside from the stability
and convergence of the associated reconstruction and quadrature schemes
\[
\bigl\{(X_{\h},Y_{\h},R_{\h})\bigr\}_{\h>0}
\quad \text{and} \quad
\bigl\{(Y_{\h},\w_{\h})\bigr\}_{\h>0}.
\]
A concrete choice of such schemes is required to assemble
system \eqref{eq:nystrom-decoupled} and get a complete numerical method.
To avoid imposing additional structure on $X_{\h}$ and $Y_{\h}$,
in this paper we have chosen to numerically evaluate the
performance of our decoupled Nyström method in a meshless setting:
$X_{\h}$ and $Y_{\h}$ are sets of scattered points, generated
in a nearly uniform way over $\Omega$ according to independent
discretization parameters $h_X,h_Y > 0$. We do not consider here
locally refined point clouds, even though our approach would be
flexible enough to handle e.g.~a posteriori adaptation of nodes.

Like most meshless methods, we make use of radial basis functions (RBF),
which are a fundamental tool in approximation theory, especially
when dealing with unstructured, scattered data.
Another powerful approximation method is moving least squares (MLS);
an introduction to both RBF and MLS is given in \cite{wendland2004scattered}.
The recent article \cite{mezzanotte24} reviews the existing
literature on RBF-based methods for the numerical solution
of Fredholm integral equations of the second kind.
Meshless methods for such equations have attracted attention from several
authors: in \cite{mirzaei2010meshless,assari2013meshless,fatahi2016new}
meshless collocation schemes are proposed with a solution
space made of, respectively, MLS basis functions and cardinal functions
from global/local RBF interpolation. The collocation integrals are however
evaluated using composite Gaussian quadrature, so the schemes can only be
considered to be partially meshless. Moreover, a Nyström scheme on the same quadrature
nodes would be just as accurate, and so the benefit of collocation with only
the solution space being meshless is, to us, unclear. In Section 3.2 of
\cite{assari2013meshless}, to overcome this limitation, collocation integrals
are evaluated using Monte Carlo quadrature, which however leads to a method
with very low order of convergence with respect to the number of quadrature nodes.

The article \cite{smith2018nearest} is, to the best of our knowledge,
the only instance in the literature where the author has modified
a Nyström scheme to separate solution nodes from quadrature nodes,
motivated by the observation that the kernel varies much more rapidly
than the unknown solution in his problem of interest. The numerical method
is fully meshless and achieves a significant increase in computational
efficiency over classical Nyström. It is, however, tightly linked
to the method of regularized stokeslets for the solution of Stokes
flow equations in the context of microscale biological fluid dynamics,
and so a decoupled Nyström scheme was not developed in its generality.
Moreover, because of the error introduced by approximating the fundamental
solution of the Stokes equation (a singular function) with a smooth kernel,
the author had no reason to consider quadrature and reconstruction schemes
of order greater than one. An error analysis of this first-order method
was carried out in \cite{gallagher2019sharp}.

A fully meshless, high-order classical Nyström method has recently been
developed using quadrature schemes that integrate exactly various kinds
of radial basis functions centered at solution nodes \cite{cavoretto2025rbf}.
For RBFs that depend on a shape parameter $\varepsilon$, an automatic choice
$\varepsilon^*$ was made via leave-one-out cross-validation.
Computing quadrature weights by imposing exactness of a quadrature
formula for a basis of functions is an approach known as \emph{moment fitting},
and requires the solution of a linear system.
The main downsides of RBF moment fitting are the density of the system
matrix and the need to accurately compute the integrals of all radial
basis functions on $\Omega$ to assemble the right-hand side.
These challenges were overcome in \cite{davydov2025meshless},
where meshless moment-free quadrature formulas that scale up to hundreds
of thousands of nodes are introduced. By adopting in this paper the new
moment-free approach, which is independent of the shape of $\Omega$,
we could easily test our decoupled Nyström method on the nontrivial
domains shown in Figure~\ref{fig:test-domains}. Our choice of
meshless quadrature and reconstruction schemes is explained in
Section~\ref{ssec:meshless-quadrature-reconstruction}.

\subsubsection*{On the generation of scattered solution and quadrature nodes}

We end this section by describing the construction of scattered sets
of nodes $X_{\h}$ and $Y_{\h}$, since our computational domains
are not rectangles, where Cartesian grids or e.g.~Halton
points \cite{halton1960efficiency} can be readily defined.
For simplicity, we limit our exposition to the set $Y_{\h}$
with discretization parameter $h_Y$, since $X_{\h}$ is always
obtained in the same way.

Starting from a parametric description
of $\Omega$, which we assume to have piecewise smooth boundary,
we generate a set of boundary nodes $Z_{\h} \subset \partial\Omega$
that are approximately uniformly spaced with respect to arc length,
using $h_Y$ as the target distance between consecutive points.
For each node $z_i$ in $Z_{\h}$, a corresponding outward-pointing
normal $\nu_i$ is also computed. Then, an advancing front method
is used to extend the set $Z_{\h}$ into $Y_{\h}$ by
placing new nodes in the interior of $\Omega$. A queue of nodes $Q_{\h}$
is initialized as $Q_{\h} = Z_{\h}$, and then elements $q \in Q_{\h}$
are iteratively extracted and removed from the queue. Each $q$
generates 15 new candidate points for the advancement of the front,
uniformly distributed in a circle of radius $h_Y$ around $q$.
If a candidate point is outside of $\Omega$ or it is too close
to any of the points in the set $Y_{\h}$ constructed so far (i.e.~if
the minimum distance is smaller than $h_Y$), it is \emph{rejected}.
Otherwise, it is \emph{accepted} and added to the queue $Q_{\h}$
and the set $Y_{\h}$. The algorithm terminates when the queue is empty.
The resulting set $Y_{\h}$ covers $\Omegabar$ in a rather homogeneous way,
as shown in Figures~\ref{fig:test-domains-a} and \ref{fig:test-domains-b}.
More details on our meshless advancing front algorithm can be found
in Section 3.2 of the doctoral thesis \cite{degliesposti2025domain}.
An open source implementation in C++ with MATLAB bindings is available at
\[
\text{\url{https://github.com/BrunoDegliEsposti/NodeGenLib}}
\]
The performance of advancing front node generation is essentially limited
by the checks required to accept or reject each candidate point.
Minimum distance queries are answered efficiently using k-d trees,
and the fast and simple algorithm INNK-relocate introduced
in \cite{degliesposti2025domain} is used to check for inclusion in $\Omega$.
The algorithm does not take the parametrization of $\partial\Omega$ as input,
but rather the set of boundary nodes $Z_{\h}$ and the corresponding normals.
Inclusion can only be checked approximately, but the accuracy is sufficient
for advancing front methods, and the savings in computation time compared
to alternative methods are substantial.

As an alternative to advancing front methods, we consider rejection
sampling of Halton points to test the robustness of our decoupled Nyström
method in the case of less regular node distributions. Consider a bounding
box $\Omega_{\text{BB}}$ around $\Omega$, and let
\[
N_{\text{BB},Y} = \text{round}\bigl(1.1 \abs{\Omega_{\text{BB}}} h_Y^{-2}\bigr).
\]
The quantity $N_{\text{BB},Y}$ measures (up to the constant 1.1) the number of
$h_Y \times h_Y$ squares that fit inside the rectangle $\Omega_{\text{BB}}$.
In this context, the discretization parameter $h_Y$ is known
as \emph{packing spacing}. Since $h_Y$ now has a different meaning
compared to the advancing front algorithm, the constant 1.1 is introduced
to roughly equalize the number of nodes generated by the two methods
for the same $h_Y$. Note that the constant essentially depends on the
domain dimension and not on its shape.
Let $Y_{\text{BB},\h}$ be the set of $N_{\text{BB},Y}$ consecutive
elements of the Halton sequence (see \cite{halton1960efficiency}
and MATLAB's \texttt{haltonset} function), chosen
with a random starting index, and affinely mapped from $[0,1]^2$
to $\Omega_{\text{BB}}$. Then, we define $Y_{\h}$ as the intersection
of $Y_{\text{BB},\h}$ and $\Omega$. Inclusion in the domain is checked
using algorithm INNK-relocate, which takes as input the set $Z_{\h}$
and corresponding normals generated in the advancing front case.
To make the comparison more fair between the two node generation methods,
we add $Z_{\h}$ to the set $Y_{\h}$ generated by rejection sampling
of Halton points. An example of the result is shown
in Figure~\ref{fig:test-domains-c}.

\begin{figure}[tbp]
\centering
\hspace*{\fill}
\subcaptionbox{Cassini oval $\Omega_1$, advancing
front node generation.
\label{fig:test-domains-a}}{
\includegraphics[width=0.25\textwidth]{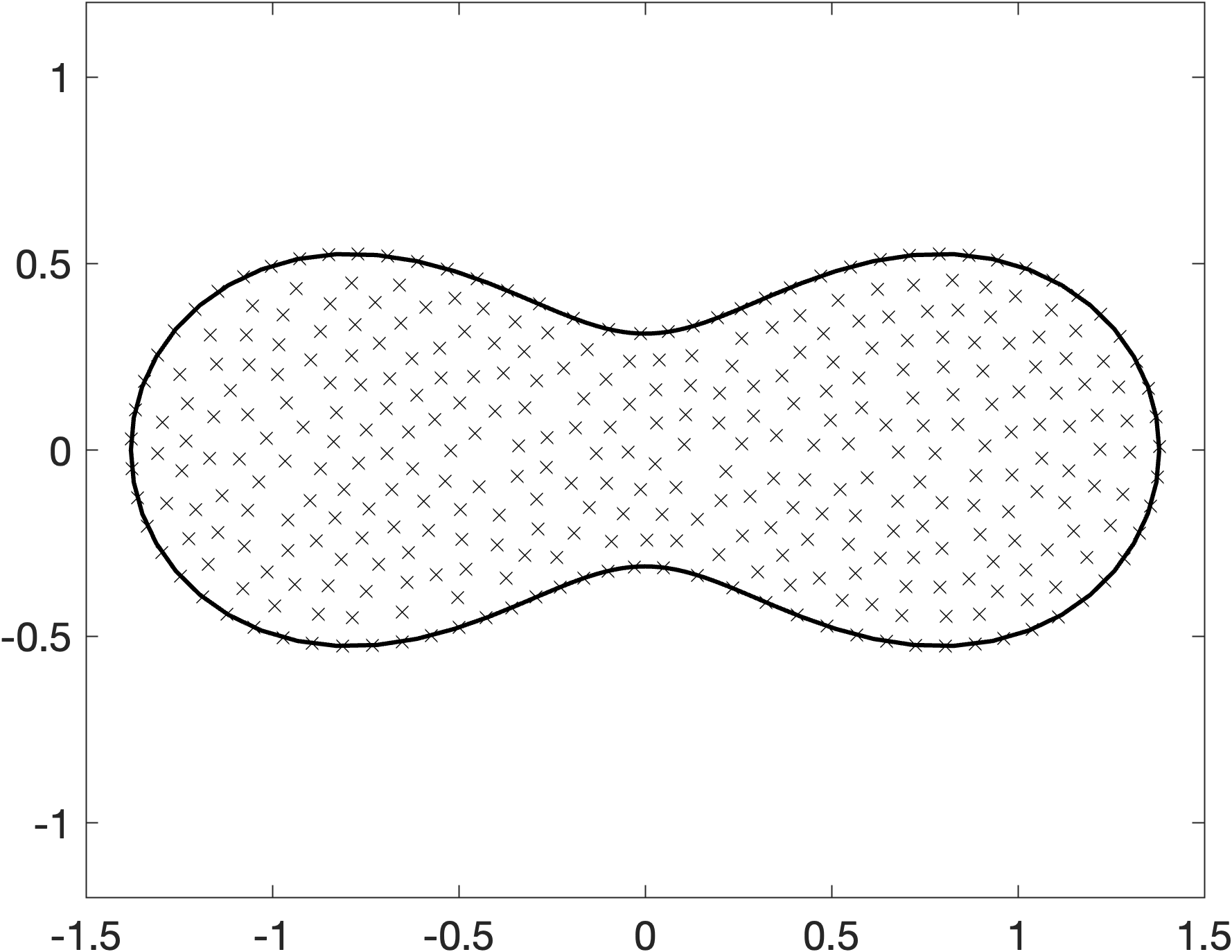}
} \hspace*{\fill}
\subcaptionbox{Domain $\Omega_2$, advancing
front node generation.
\label{fig:test-domains-b}}{
\includegraphics[width=0.25\textwidth]{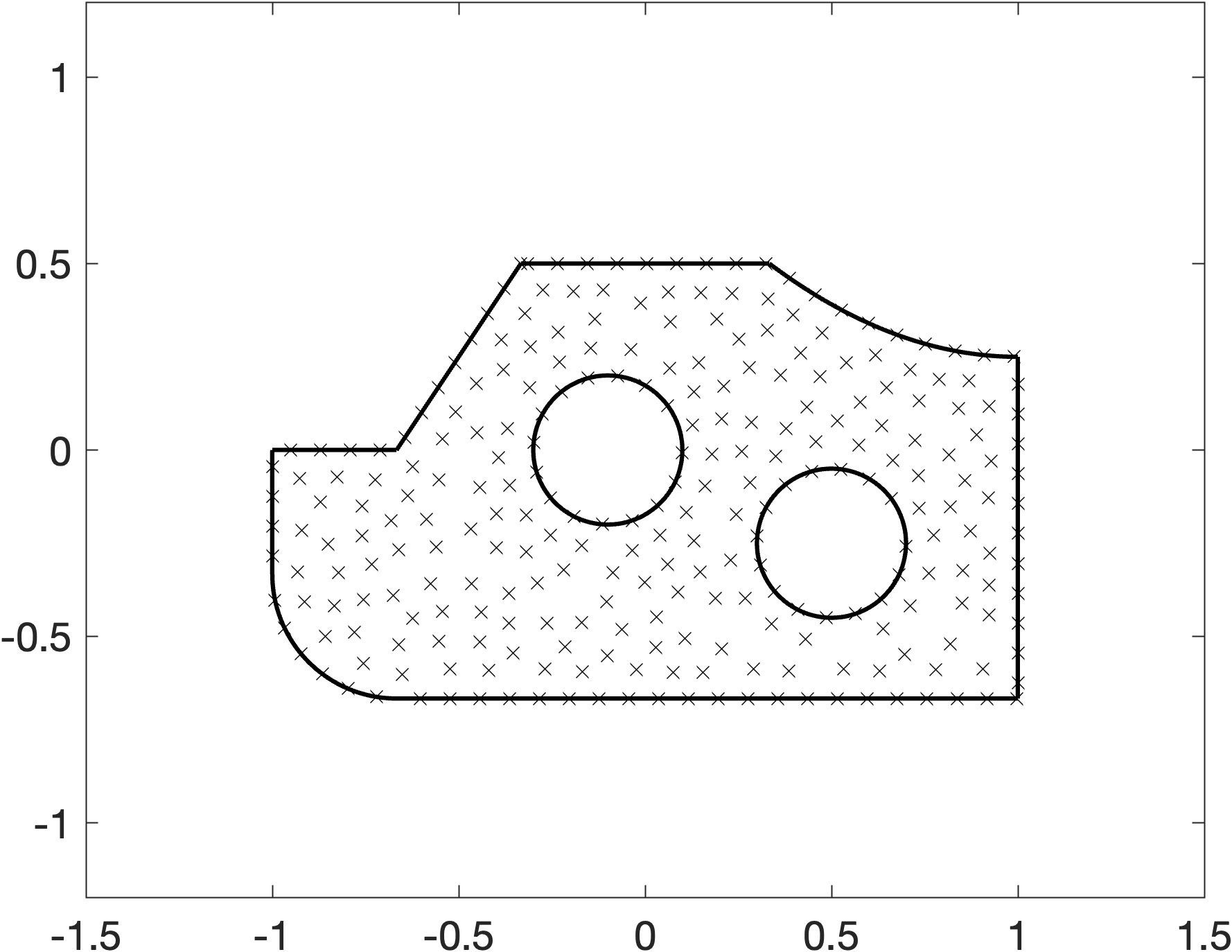}
} \hspace*{\fill}
\subcaptionbox{Domain $\Omega_2$, Halton points
rejection sampling.
\label{fig:test-domains-c}}{
\includegraphics[width=0.25\textwidth]{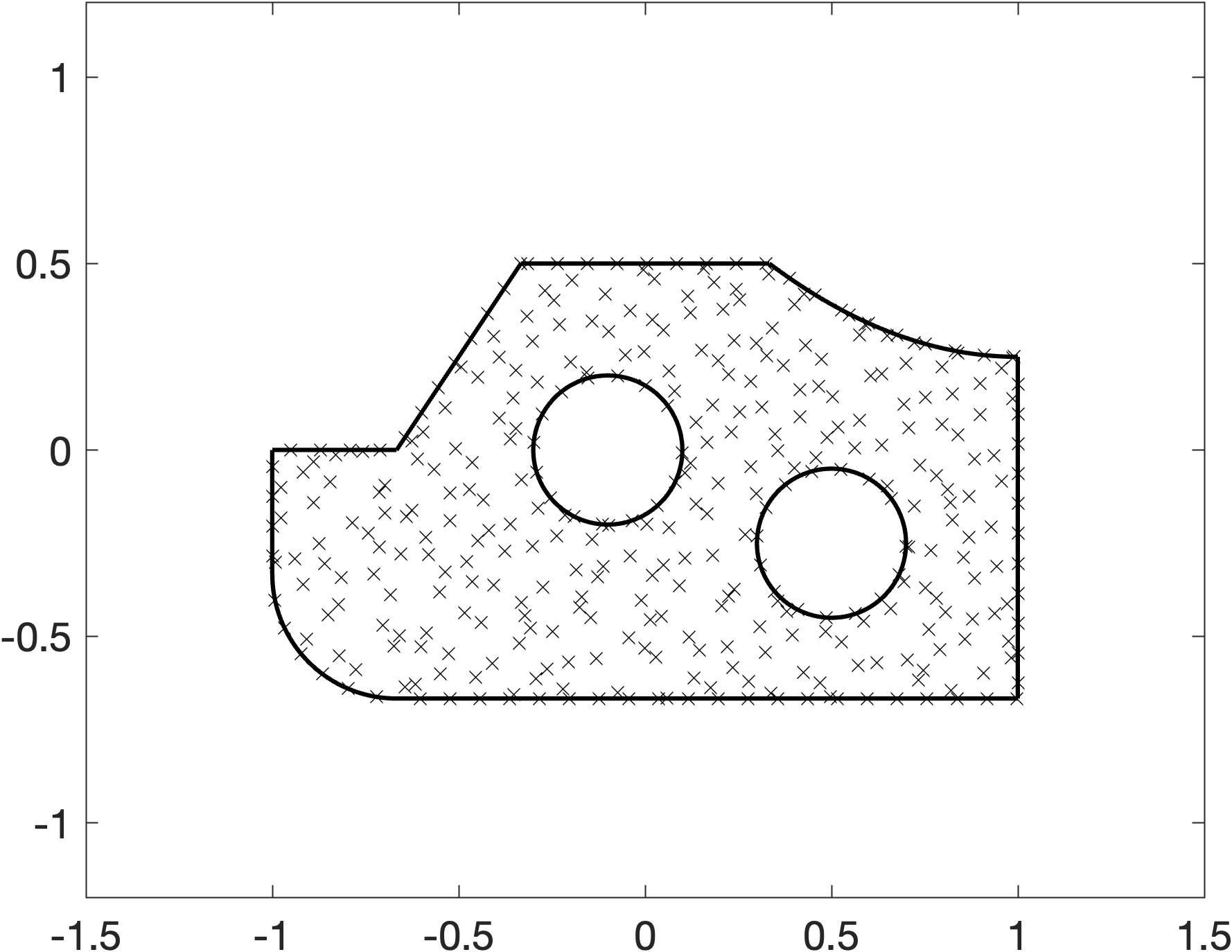}
} \hspace*{\fill}
\caption{Domains used in numerical tests.
A set of quadrature nodes with $h_Y = 0.08$
is displayed as crosses on top of the 2D domains.
Note that, by construction, some nodes are
placed on the boundary.}
\label{fig:test-domains}
\end{figure}

\subsection{On the choice of meshless quadrature and reconstruction schemes}
\label{ssec:meshless-quadrature-reconstruction}
The convergence of our decoupled Nyström scheme has been
proved in Theorem~\ref{theo:nystrom-decoupled} under the assumption
that the reconstruction scheme is stable, a key hypothesis
when establishing collective compactness of operators $\Kcal_{\h}$.
Even though it might be possible in practice to relax the bound
\[
\norm{R_{\h}}_{\infty} \leq C_I
\quad \text{for all $h_X > 0$ and $h_Y > 0$}
\]
with a sufficiently mild growth condition on $\norm{R_{\h}}_{\infty}$
as $h_X \to 0$, we have chosen a stable method for our numerical experiments.
We have defined reconstructions to be a slightly more general concept
than interpolation or quasi-interpolation: for any function $v$,
instead of seeking a global approximant $\tilde{v}$, we only aim
to approximate values of $v$ at $Y_{\h}$ by linearly combining
values of $v$ at $X_{\h}$ using the weights in~$R_{\h}$.
Note that, whenever a global approximant $\tilde{v}$ is available,
$R_{\h}$ is constructed by expressing $\tilde{v}$ as a function
of $v_{|X_{\h}}$, and evaluating $\tilde{v}$ at $Y_{\h}$.
For this reason, the Lebesgue constant of an interpolation or
quasi-interpolation scheme is an upper bound on $\norm{R_{\h}}_{\infty}$.

Tools such as local polynomial reproduction and moving least squares
are classical examples of \emph{high-order, stable, meshless} reconstruction
schemes~\cite{wendland2004scattered}. More recently, it was shown
in~\cite{davydov2018minimal} that local polynomial reproduction obtained
by minimizing suitable absolute seminorms of the reconstruction weights
deliver particularly good accuracy in numerical experiments
and are empirically stable. In general, one can expect stability
from any reconstruction scheme based on oversampled local approximation
with scalable stencils of fixed size, see~\cite{davydov2019optimal}
for the crucial definition of \emph{scalability} and the construction
of near-optimal stencils in Sobolev spaces.
The observation that local RBF interpolation with polyharmonic splines (PHS),
which are free from the choice of a shape parameter, deliver scalable
stencils goes back to~\cite{iske2003approximation},
where the convergence order for functions in $C^q(\Omegabar)$
is also proved. As will be explained shortly, the meshless quadrature scheme
introduced in~\cite{davydov2025meshless} and used in our numerical
experiments (in its MFD variant) depends on numerical differentiation
formulas constructed via polyharmonic kernels. To simplify our code,
we have therefore used local PHS interpolation as our reconstruction scheme.
The numerical results in Figure~\ref{fig:test-choice-qR}
anyway suggest that the choice of the reconstruction scheme is not critical,
and that our results can be reproduced with any other stable, high-order method.
A posteriori checks on the stability of our reconstructions are
provided in Tables~\ref{tab:stability-advfront} and~\ref{tab:stability-halton}.

For completeness, we briefly recall here how local PHS approximation
can be used to reconstruct $v(y)$ from the values of $v$ at the $N$ nodes
in $X_{\h}$ closest to $y \in \Omega$, and refer the reader to
\cite{fornberg2015primer} for the missing details.
More generally, we aim to reconstruct $\Lcal v(y)$ with $\Lcal$
being a differential operator of order $\ell \geq 0$.
Fix $m \in \N, m \geq 1$ as the \emph{polynomial augmentation order}
of the method, a quantity that directly controls its convergence order,
and let
\[
N = 2M,
\qquad M = \binom{m-1+d}{m-1},
\]
i.e.~$N$ is two times the dimension $M$ of the space
$\Pi_m^d$ of $d$-dimensional polynomials of degree up
to $m-1$ (a standard choice since \cite{bayona2017role}).
The goal is to determine for any $y \in \Omega$ local
weights $a_1,\dots,a_N$ such that
\begin{equation} \label{eq:differentiation-formula}
\Lcal v(y) \approx \sum_{i=1}^N a_i v(x_i),
\end{equation}
with $X_{\h}(y) = \{x_1,\dots,x_N\}$ being the set of $N$ elements
in $X_{\h}$ closest to $y$. Consider the odd-power polyharmonic spline RBF
\[
\varphi(r) = r^{2m-1},
\]
expressed in the radial coordinate $r$, and let $p_1,\dots,p_M$
be the standard monomial basis of $\Pi_m^d$.
As a preconditioning step, we normalize the set $X_{\h}(y)$ by
subtracting $y$ and scaling its elements by $h_X^{-1}$:
\[
x_1' = \frac{x_1-y}{h_X},
\qquad \dots, \qquad
x_N' = \frac{x_N-y}{h_X}.
\]
The weights in \eqref{eq:differentiation-formula} are found
by extracting the first $N$ components of the
solution of the linear system
\[
\begin{pmatrix}
\varphi\bigl(\norm{x_1'-x_1'}\bigr) & \dots & \varphi\bigl(\norm{x_1'-x_N'}\bigr)
    & p_1(x_1') & \dots & p_M(x_1') \\
\vdots & & \vdots & \vdots & & \vdots \\
\varphi\bigl(\norm{x_N'-x_1'}\bigr) & \dots & \varphi\bigl(\norm{x_N'-x_N'}\bigr)
    & p_1(x_N') & \dots & p_M(x_N') \\
p_1(x_1') & \dots & p_1(x_N') & 0 & \dots & 0 \\
\vdots & & \vdots & \vdots & & \vdots \\
p_M(x_1') & \dots & p_M(x_N') & 0 & \dots & 0
\end{pmatrix}
\begin{pmatrix}
a_1' \\
\vdots \\
a_N' \\
b_1' \\
\vdots \\
b_M'
\end{pmatrix}
=
\begin{pmatrix}
(\Lcal \varphi)\bigl(\norm{x_1'}\bigr) \\
\vdots \\
(\Lcal \varphi)\bigl(\norm{x_N'}\bigr) \\
(\Lcal p_1)(0) \\
\vdots \\
(\Lcal p_M)(0)
\end{pmatrix}
\]
and scaling each $a_i'$ by $h_X^{-\ell}$ to get $a_i$. The linear system
is consistent if $X_{\h}(y)$ is unisolvent for $\Pi_m^d$,
and its condition number is scale-invariant.
As reported in \cite{flyer2016role}, polyharmonic kernels with an
even power of $r$ and a logarithmic term would give no significant differences
in the accuracy and stability of the numerical differentiation formula
\eqref{eq:differentiation-formula}, even though theoretical considerations
would suggest using these RBFs instead of $r^{2m-1}$ when $d$ is even.

The convergence order of \eqref{eq:differentiation-formula}
for $v \in C^q(\Omegabar)$ is the minimum between $q - \ell$ and $m - \ell$.
For this reason, in the numerical experiments of
Section~\ref{sec:numerical-experiments}, where the reconstruction weights
are computed by choosing $\Lcal$ as the identity operator, we set $m = q_R$.
Each row of $R_{\h}$ contains the $N$ weights of local PHS interpolation
corresponding to a different $y \in Y_{\h}$, and so $R_{\h}$
is a sparse matrix with only $N \abs{Y_{\h}}$ nonzero elements.
This in turn allows fast assembly of the (dense) system matrix
$A_{\h} = \lambda I_{\h} - K_{\h} W_{\h} R_{\h}$ in our decoupled Nyström method.
Note that, even though the reconstruction given by local PHS interpolation
is not a continuous function of $y$ (because of the way
the set of $N$ nearest nodes in $X_{\h}$ changes with $y$),
this has no effect on the regularity of the Nyström interpolant,
which only depends on the smoothness of $k$ and $f$.

\subsubsection*{Meshless moment-free quadrature formulas arising from numerical differentiation}
The construction of stable, high-order quadrature formulas on scattered sets
of nodes is a challenging but essential step in defining meshless Nyström schemes.
For this work, we have chosen to rely on the moment-free quadrature
formulas recently introduced in \cite{davydov2025meshless} for two reasons.
First, as explained in Section~\ref{ssec:decoupled-meshless},
they can easily handle domains with complex shapes by overcoming
the moment computation problem.
Second, meshless moment-free formulas can be computed for very large numbers of nodes;
quadrature sets $Y_{\h}$ with tens of thousands of nodes may arise when
considering large decoupling ratios $\gamma$, but cannot be handled
by some other methods such as direct RBF moment fitting.

A peculiar feature of the moment-free approach is that quadrature
weights $\w_{\h}$ are computed as part of a combined quadrature
designed to approximate the difference of integrals over~$\Omega$
and its boundary.
Let $Y_{\h} \subset \Omegabar$ and $Z_{\h} \subset \partial\Omega$
be nearly uniform sets of scattered nodes, such as the ones produced
by the advancing front method of Section~\ref{ssec:decoupled-meshless}
with discretization parameter $h_Y > 0$.
Given sufficiently smooth functions $v \colon \Omega \to \R$
and $g \colon \partial\Omega \to \R$, we seek quadrature schemes
$(Y_{\h},\w_{\h})$ and $(Z_{\h},\vec{\mu}_{\h})$ such that
\[
\int_{\Omega} v(x) \dx - \int_{\partial\Omega} g(x) \dsigma(x)
\; \approx \;
\w_{\h}^T \, v_{|Y_{\h}} - \vec{\mu}_{\h}^T \, g_{|Z_{\h}}.
\]
The reason for this combined approach is that $\w_{\h}$ and $\vec{\mu}_{\h}$
are simultaneously determined by solving a sparse linear system that
enforces the weights to satisfy a discrete counterpart of the divergence theorem
\[
\int_{\Omega} \diver(F)(x) \dx
- \int_{\partial\Omega} F(x) \cdot \nu(x) \dsigma(x)
= 0
\qquad \text{for any sufficiently smooth $F \colon \Omega \to \R^d$}.
\]
More precisely, the weights $\w_{\h}$ and $\vec{\mu}_{\h}$ are found as the
minimum-norm solution of an underdetermined system whose construction
depends on the specific kind of moment-free approach employed.
Out of the two described in \cite{davydov2025meshless}, namely
Algorithm~2.8 (MFD) and Algorithm~2.9 (BSP), we opted for the MFD
approach based on meshless finite difference formulas
\eqref{eq:differentiation-formula}, which we recall here.

Let $\Xi_{\h} \subset \Omegabar$ be a set of nodes generated by the
advancing front method with discretization parameter $h_{\Xi} = 1.6 \, h_Y$,
so that $\Xi_{\h}$ is slightly coarser than $Y_{\h}$.
Let $L_1,L_2 \in \R^{\abs{Y_{\h}} \times \abs{\Xi_{\h}}}$
be the sparse matrices whose rows contain the weights of numerical
differentiation formulas \eqref{eq:differentiation-formula} for the
partial derivatives $\partial_1, \partial_2$ at nodes in $Y_{\h}$
using pointwise values at $\Xi_{\h}$ and polynomial augmentation orders $q_w+1$:
\[
(\partial_1 v)_{|Y_{\h}} \approx L_1 \, v_{|\Xi_{\h}},
\qquad
(\partial_2 v)_{|Y_{\h}} \approx L_2 \, v_{|\Xi_{\h}}
\]
Similarly, let $\tilde{B} \in \R^{\abs{Z_{\h}} \times \abs{\Xi_{\h}}}$
be the sparse matrix containing the weights of reconstruction
formulas for boundary nodes in $Z_{\h}$ using pointwise values at $\Xi_{\h}$
and polynomial augmentation order $q_w$:
\[
v_{|Z_{\h}} \approx \tilde{B} \, v_{|\Xi_{\h}}
\]
Assemble now the diagonal matrices
$D_1,D_2 \in \R^{\abs{Z_{\h}} \times \abs{Z_{\h}}}$ whose elements
are the first and second coordinates of the outward-pointing normals
$\nu_{\h}$ at the set of boundary nodes $Z_{\h}$,
and concatenate horizontally $L_1,L_2$ into $L$,
and the products $D_1 \tilde{B}, D_2 \tilde{B}$ into $B$.
Let $\mathbbm{1}$ be the all-ones column vector of size $\abs{Z_{\h}}$.
The minimum-norm solution (in the 2-norm) of the underdetermined sparse linear system
\begin{equation} \label{eq:quadrature-system}
\begin{pmatrix}
L^T & -B^T \\
0 & \mathbbm{1}^T
\end{pmatrix}
\begin{pmatrix}
\w_{\h} \\
\vec{\mu}_{\h}
\end{pmatrix}
=
\begin{pmatrix}
0 \\
\abs{\partial\Omega}
\end{pmatrix}
\end{equation}
contains the weights of the combined quadrature.
Since the boundary weights $\vec{\mu}_{\h}$ are not used in our Nyström method,
they can be safely discarded after solving~\eqref{eq:quadrature-system},
even though the inclusion of boundary weights in~\eqref{eq:quadrature-system}
is essential in defining a moment-free method: note how the right-hand side
is homogeneous, except for the last entry enforcing exactness of $\vec{\mu}_{\h}$
for constant integrands.

It is proved in \cite{davydov2025meshless} that, assuming stability,
the theoretical convergence order of the quadrature scheme
$(Y_{\h},\w_{\h})$ is~$q_w$ for sufficiently regular integrands.
In the same paper, the convergence rate in numerical
experiments almost always exceeded $q_w$.
Since Nyström schemes inherit convergence rates from quadrature schemes
(assuming no other limiting factors), our decoupled meshless Nyström scheme
also demonstrates superconvergence with respect to~$q_w$,
see for example the error curves in Figure~\ref{fig:test-choice-qW}.
A posteriori checks on the stability of our moment-free quadrature are
provided in Tables~\ref{tab:stability-advfront} and~\ref{tab:stability-halton}.

\graphicspath{{./figures/}}

\pgfplotstableset{
	search path = ./tables,
	col sep = comma,
	string replace = {NaN}{},
	empty cells with = {-},
	set thousands separator = {},
	every column/.append style = %
		{zerofill, sci e, precision = 2}
}

\pgfplotstableset{
	columns/hX/.style={
		column name=$h_X$, sci
	},
	columns/hY/.style={
		column name=$h_Y$, sci
	},
	columns/NX/.style={
		column name=$\abs{X_{\h}}$, int detect
	},
	columns/NY/.style={
		column name=$\abs{Y_{\h}}$, int detect
	},
    columns/condA/.style={
        column name=$\kappa(A_{\h})$
    },
    columns/stabw/.style={
        column name=$\norm{w_{\h}}_1 \abs{\Omega_2}^{-1}$
    },
    columns/stabR/.style={
        column name=$\norm{R_{\h}}_{\infty}$
    },
    columns/errLinf/.style={
        column name=$e_{\infty}$
    },
    columns/errRMS/.style={
        column name=$e_{\text{RMS}}$
    },
    columns/errL2/.style={
        column name=$e_2$
    }
}

\section{Numerical experiments}
\label{sec:numerical-experiments}
In this section we numerically evaluate the
accuracy and efficiency of our decoupled
Nyström method based on meshless moment-free
quadrature formulas.
Nonhomogeneous Fredholm integral equations
of the second kind with smooth kernels
are solved on the 2D domains shown in
Figure~\ref{fig:test-domains}, often relying on
the method of manufactured solutions
to provide an exact reference solution
to evaluate numerical errors.
However, constructing the right-hand side $f$
from a given solution $u$ in the case of an
integral equation is not as easy as in the case
of a partial differential equation, where symbolic
differentiation rules can be applied to
the analytic expression of $u$.
In fact, looking at \eqref{eq:fie}, the integral
\begin{equation} \label{eq:manufsol}
(\Kcal u)(x_{\h,i}) = \int_\Omega k(x_{\h,i},y) u(y) \dy
\end{equation}
needs to be computed as accurately as possible for
all $x_{\h,i} \in X_{\h}$, a much more difficult task compared
to the evaluation of a differential operator applied to $u$.
A natural choice here would be to approximate
the integral in \eqref{eq:manufsol}
using the quadrature formula $(Y_{\h},\w_{\h})$,
or possibly an even finer formula
for increased accuracy.
However, this approach would introduce
an error in the evaluation of manufactured
$f$ that cannot
be separated from the discretization
error of the Nyström method: both errors
would affect the distance between
$u_{|X_{\h}}$ and the numerical solution $\uhatvec_{\h}$.
To avoid this problem and increase the reliability
of the method of manufactured solutions,
we evaluate the integral in \eqref{eq:manufsol}
to machine precision using a combination
of the open-source package
Chebfun~\cite{driscoll2014chebfun},
the divergence theorem,
and one-dimensional adaptive Gaussian quadrature
on each smooth piece of $\partial\Omega$.
More precisely, we start by approximating
the integrands in \eqref{eq:manufsol}
by a polynomial $p_{\h,i}(y)$ for each
$x_{\h,i} \in X_{\h}$.
Assuming that all integrands are infinitely
smooth, this task can be carried out with
spectral accuracy over a bounding box of $\Omega,$ using the \texttt{chebfun2}
routine provided by the open-source Matlab 
package Chebfun. Such a routine adaptively
chooses the polynomial bi-degree to ensure
that the smooth functions at hand are approximated
to machine precision on the whole bounding box.
To improve efficiency, Chebfun iteratively constructs
the polynomial in low-rank form, i.e.~as
the sum of products of univariate polynomials
with as few summands as possible, see
\cite{townsend2013extension} for more details.
Then, the vector field
\[
P_{\h,i}(y)
= P_{\h,i}(y_1,y_2)
= \begin{pmatrix}
\int_{0}^{y_1} p_{\h,i}(\eta,y_2) \deta \\
0
\end{pmatrix},
\]
which can be easily computed in closed form
because $p_{\h,i}$ is a polynomial, satisfies
$\diver(P_{\h,i}) = p_{\h,i}$, and so,
by the divergence theorem,
\[
\int_\Omega p_{\h,i}(y) \dy
= \int_{\partial\Omega} P_{\h,i}(y) \cdot \nu(y) \dsigma(y).
\]
Since an exact parametric description of
the boundary is available for all
test domains, the boundary integral can
be evaluated to machine precision by MATLAB's
one-dimensional adaptive Gaussian quadrature
routine \texttt{integral} with absolute
and relative tolerances set to $4\cdot10^{-16}$.
If the boundary is only piecewise smooth,
the contributions from each piece are computed
separately and summed.

The manufactured solution $u$ is a scaled and
translated version of the Franke function
\cite{franke1979critical}, as provided
by the \texttt{franke} command in MATLAB.
The Franke function is meant to be evaluated
over $[0,1]^2$, but the domains in
Figure~\ref{fig:test-domains}
are centered at the origin. For this reason,
we compose the Franke function with the bijective affine mapping
\[
(x_1,x_2) \mapsto
\left( \frac{x_1+1}{2}, \frac{x_2+1}{2} \right)
\]
from $[-1,1]^2$ to $[0,1]^2$.

The accuracy of the Nyström method can be assessed
in several ways. As explained in
Section~\ref{sec:numerical-scheme},
a remarkable property of the method
is that signed pointwise errors are not just
elements of some vector in $\R^{\abs{X_{\h}}}$,
but rather point samples of a smooth function
defined over $\Omegabar$, namely the
difference between the exact solution $u$ and
the Nyström interpolant $u_{\h}$:
\[
u_{|X_{\h}} - \uhatvec_{\h}
= (u-u_{\h})_{|X_{\h}},
\quad u-u_{\h} \in C^q(\Omegabar).
\]
This means that the infinity norm $\norm{u-u_{\h}}_\infty$
that appears in estimate \eqref{eq:errestdec}
can be approximated by sampling over any
validation set $V_{\h}$, not just $X_{\h}$.
For simplicity, we have chosen $V_{\h} = X_{\h}$
in all of our numerical tests, even though a finer
validation set would lead to a more accurate approximation
of $\norm{u-u_{\h}}_\infty$. We have
computed relative errors instead of
absolute errors to make results comparable
across different domains and parameters:
\[
e_{\infty}
= \frac{\norm{(u-u_{\h})_{|X_{\h}}}_{\infty}}
{\norm{u_{|X_{\h}}}_{\infty}}.
\]
Discrete quadratic norms of the error,
being less sensitive to local peaks in the error,
are also of practical interest.
In the literature on meshless methods,
under the assumption of quasi-uniform
nodes, the most common error metric is
the relative root mean square (RMS) error
\[
e_{\text{RMS}} = \sqrt{\frac{
\sum_{i=1}^{\abs{X_{\h}}} (u(x_{\h,i})-\uhatvec_i)^2
}{ \sum_{i=1}^{\abs{X_{\h}}} u(x_{\h,i})^2}}.
\]
However, a more natural error metric
in our setting is
\[
e_{2} = \sqrt{\frac{
\sum_{i=1}^{\abs{Y_{\h}}}
w_{\h,i} (u(y_{\h,i})-u_{\h}(y_{\h,i}))^2
}{ \sum_{i=1}^{\abs{Y_{\h}}} w_{\h,i} u(y_{\h,i})^2}},
\]
which approximates the continuous
$L^2$ norm of the error using
the quadrature formula $(Y_{\h},\w_{\h})$.
The RMS case instead corresponds to a quadrature
formula with set of nodes $X_{\h}$ and constant
weights, as one would obtain with lower-order
Monte Carlo or quasi-Monte Carlo numerical
integration. The values $u_{\h}(y_{\h,i})$
are given by the decoupled Nyström interpolation
formula \eqref{eq:interp-decoupled}.

\subsection{Validation of convergence order}
\label{ssec:validation-convergence-order}
Our first goal is to show that the convergence
order of a classical 2D Nyström scheme
based on the meshless moment-free quadrature
formulas of MFD type described in
Section~\ref{ssec:meshless-quadrature-reconstruction}
matches (or exceeds) the order predicted by
Proposition~\ref{prop:nystrom-classical}.
Subsequently, we want to show that the
order of convergence can be preserved even if
the node sets $X_{\h}$ and $Y_{\h}$ are decoupled,
as implied by Theorem~\ref{theo:nystrom-decoupled}.

To this aim, we consider the integral
equation \eqref{eq:fie} posed over
the Cassini oval $\Omega_1$ shown
in Figure~\ref{fig:test-domains-a},
the interior of
a quartic plane curve defined as the locus
of points such that the product of the distances
to two fixed points $(-a,0)$ and $(a,0)$,
called \emph{foci}, is a constant $b^2 \in \R_+$:
\[
\Omega_1 = \Bigl\{ (x_1,x_2) \in \R^2 \mathrel{\Big|}
\bigl((x_1+a)^2+x_2^2\bigr)
\bigl((x_1-a)^2+x_2^2\bigr) - b^4 < 0 \Bigr\}.
\]
For $a < b < a\sqrt{2}$, the domain $\Omega_1$
has smooth boundary and is simply connected,
although it is not convex: its shape resembles
the number eight rotated sideways.
For our numerical experiments, we have taken
$a = 0.95$ and $b = 1$.
Regarding the parameters of the integral equation,
we have chosen $\lambda=2$, a Gaussian kernel
with $\sigma=0.1$, Franke's function
as the exact solution $u$, and a manufactured
right-hand side $f$. As explained in
Section~\ref{sec:fie}, the norm of operator $\Kcal$
is smaller than 1, and so the choice $\lambda=2$
guarantees that the integral equation~\eqref{eq:fie}
is well-posed.

Relative errors are shown in logarithmic scale
in Figure~\ref{fig:test-choice-qW-L2-1}
as a function of discretization parameter $h$,
with $\h = (h_X,h_Y) = (h,h)$.
The identical sets of nodes $X_{\h}$ and $Y_{\h}$ are generated
by the advancing front algorithm provided by the
library NodeGenLib, see Section~\ref{ssec:decoupled-meshless},
and $h$ ranges from 0.08 to 0.02.
Different lines correspond to different
choices of quadrature orders $q_w$;
since $X_{\h}=Y_{\h}$, the Nyström method is classical
and does not involve a reconstruction operator $R_{\h}$.
We observe high orders of convergence,
with the slopes of the error curves
increasing with $q_w$, and exceeding
the expected order $q_w$ by 1 or more. This behavior
is not surprising, since it has already
been observed in \cite{davydov2025meshless}
that meshless moment-free quadrature formulas
are often superconvergent. The estimated order of convergence
(EOC) is obtained with a linear least squares fit in logarithmic scale.

In Figures~\ref{fig:test-choice-qW-L2-2} and \ref{fig:test-choice-qW-L2-3}
we switch to our decoupled Nyström scheme with $X_{\h} \neq Y_{\h}$
and introduce a reconstruction scheme with order $q_R$ equal to $q_w$.
In Figure~\ref{fig:test-choice-qW-L2-2}, $\h = (h_X,h_Y) = (h/\sqrt{2},h)$,
so that $X_{\h}$ is finer than $Y_{\h}$, whereas in the other figure
$\h = (h_X,h_Y) = (h,h/\sqrt{2})$, so that $Y_{\h}$ is finer instead.
We observe that refinement of $X_{\h}$ leads to no improvement
over the classical scheme with $X_{\h}=Y_{\h}$, but also no loss
in accuracy due to decoupling, whereas refinement of $Y_{\h}$
reduces the relative $L^2$ error by $\sqrt{2}^{q_w}$ or more, in line
with our convergence analysis \eqref{eq:errestdec}. This asymmetric
behavior strongly suggests that the quadrature term in \eqref{eq:errestdec}
dominates the reconstruction term, and so the optimal decoupling ratio $\gamma^*$
is significantly larger than 1, to the point where any reduction in reconstruction
error by lowering $h_X$ has no noticeable effect on $e_2$. This interpretation
will be confirmed by additional numerical experiments in this section.

\begin{figure}[tbp]
\centering
\hspace*{\fill}
\subcaptionbox{Classical Nyström, $X_{\h} = Y_{\h}$
with $h_X = h_Y = h$.
\label{fig:test-choice-qW-L2-1}}{
\includegraphics[width=0.3\textwidth]{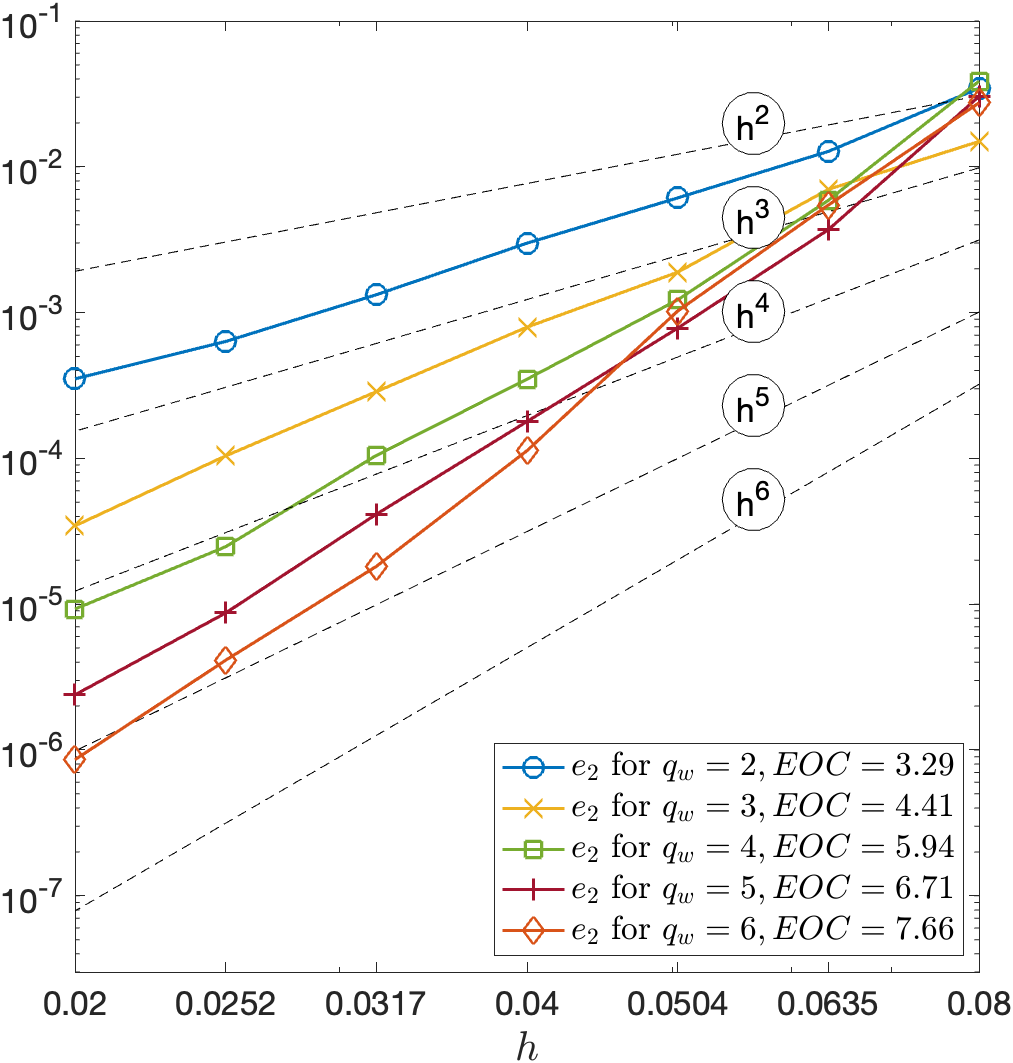}
} \hspace*{\fill}
\subcaptionbox{Decoupled Nyström, $h_X = h / \sqrt{2}$,
$h_Y = h$, $q_R = q_w$.
\label{fig:test-choice-qW-L2-2}}{
\includegraphics[width=0.3\textwidth]{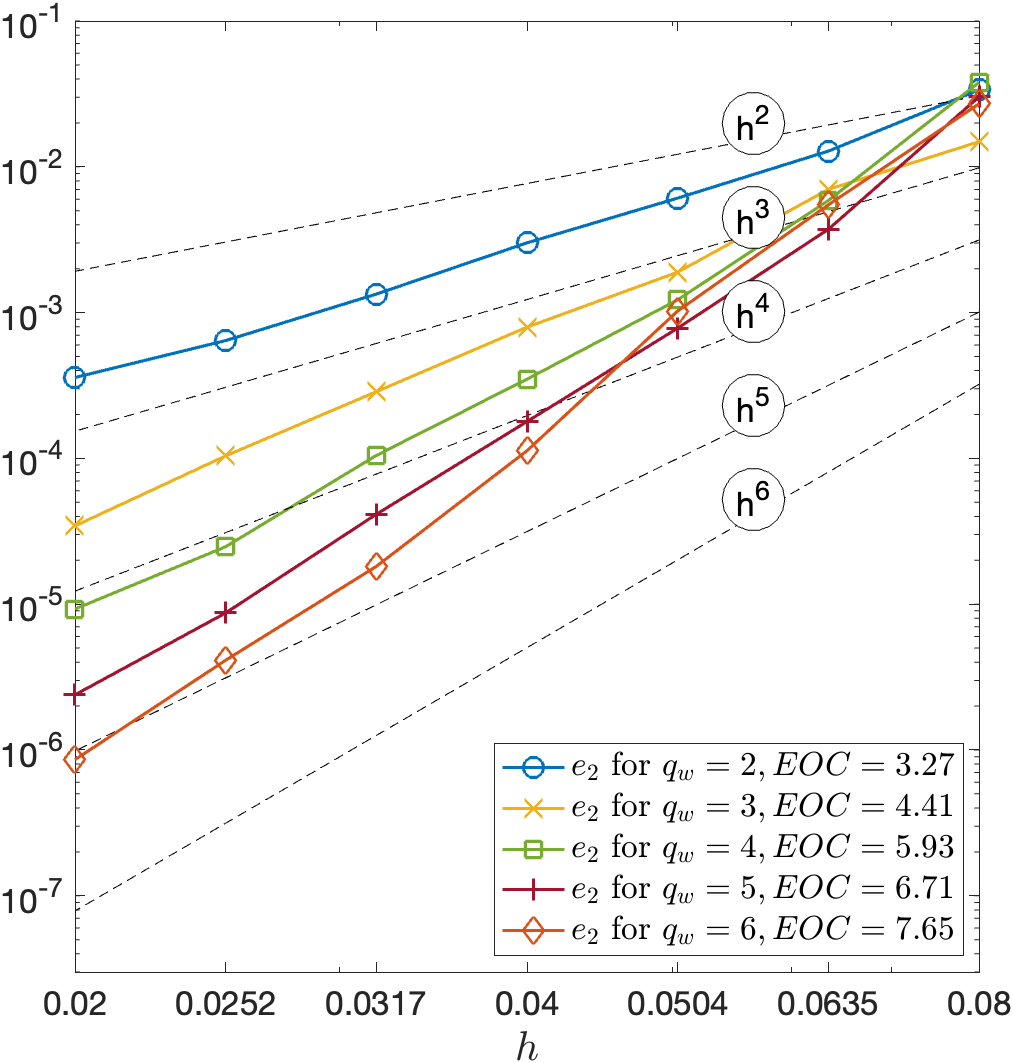}
} \hspace*{\fill}
\subcaptionbox{Decoupled Nyström, $h_X = h$,
$h_Y = h/\sqrt{2}$, $q_R = q_w$.
\label{fig:test-choice-qW-L2-3}}{
\includegraphics[width=0.3\textwidth]{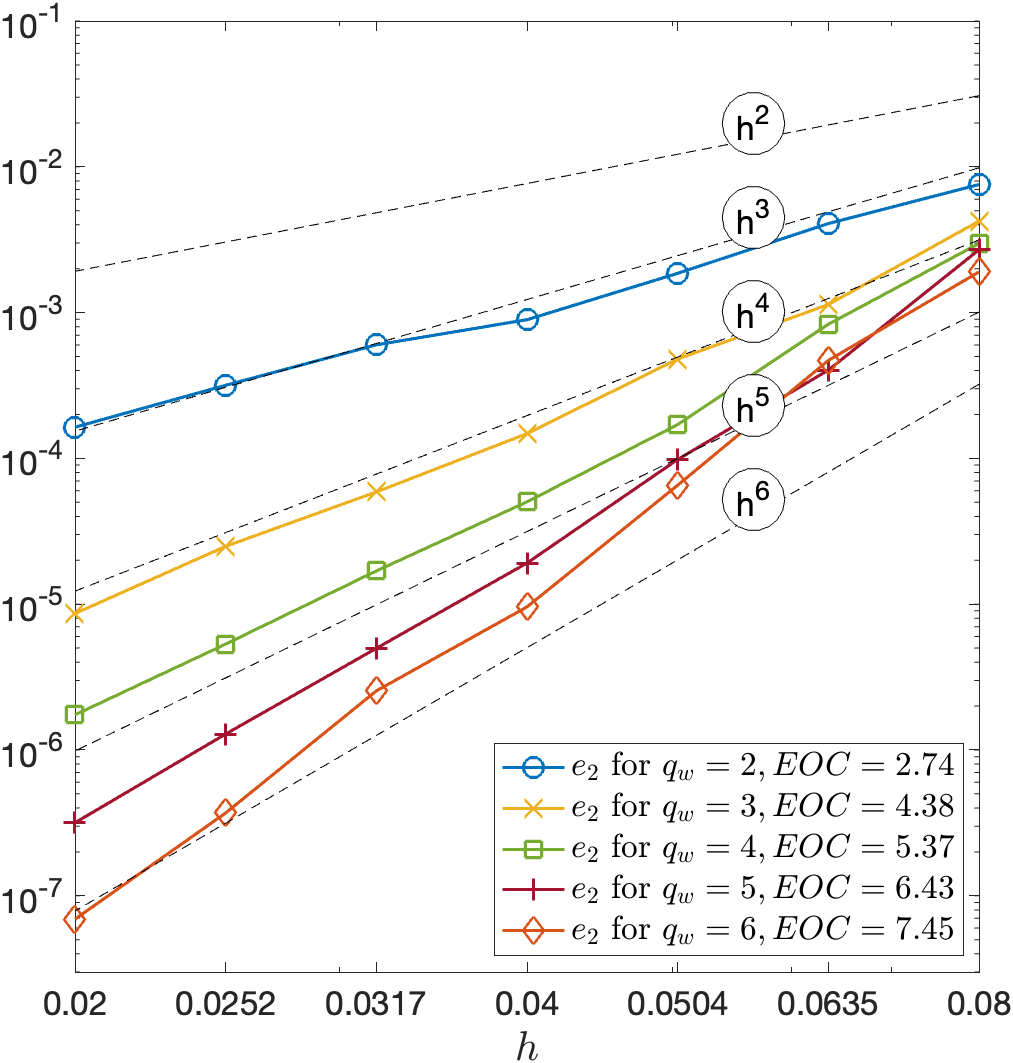}
} \hspace*{\fill}
\caption{Relative $L^2$ errors of the classical and decoupled Nyström method
used to solve the Fredholm integral equation \eqref{eq:fie} with $\lambda=2$
and Gaussian kernel with $\sigma=0.1$ over a Cassini oval. Exact solution
is Franke's function.}
\label{fig:test-choice-qW}
\end{figure}

To validate the robustness of our decoupled meshless Nyström scheme,
we switch for the next numerical experiment to the more complex
domain $\Omega_2$ shown in Figure~\ref{fig:test-domains-b},
featuring two holes and nonconvex piecewise smooth boundary.
Such a domain cannot be easily partitioned into curvilinear quadrilateral patches
required by tensor-product Gaussian quadrature, and this highlights the
geometric flexibility of our approach compared to other meshless schemes
for Fredholm integral equations \cite{mirzaei2010meshless}. We generate node sets $X_{\h}$
with $h_X = h$ and $Y_{\h}$ with $h_Y = h/\sqrt{2}$ over $\Omega_2$
using the advancing front method, and then solve the same Fredholm integral
equation as in the tests of Figure~\ref{fig:test-choice-qW},
choosing $q_w = 4$ and the Franke function as manufactured solution.
Table~\ref{tab:stability-advfront} reports the sizes $\abs{X_{\h}},\abs{Y_{\h}}$
of the sets $X_{\h},Y_{\h}$, the condition number of the system matrix $A_{\h}$,
the 1-norm of the quadrature weights $w_{\h}$ normalized by the area of $\Omega_2$,
the $\infty$-norm of the reconstruction matrix $R_{\h}$, and relative
errors in various norms. All quantities related to numerical stability
remain bounded as $h$ gets smaller. The relative error in the $\infty$-norm
decays at essentially the same rate as $e_{\text{RMS}}$ and $e_2$, with
an estimated order of convergence of 5.5.

\begin{table}[tbp]
\caption{Decoupled Nyström on nodes given by advancing front
generation. Fredholm integral equation \eqref{eq:fie} with $\lambda=2$ and
Gaussian kernel with $\sigma=0.1$ over domain $\Omega_2$.
Franke function as manufactured solution.}
\label{tab:stability-advfront}
\centering
\begin{small}
\pgfplotstabletypeset[
	columns={
    hX,hY,NX,NY,condA,stabw,stabR,errLinf,errRMS,errL2
	}]
{test-stability-advfront.csv}
\end{small}
\end{table}

The advancing front method produces well-spaced node sets, but our
decoupled meshless Nyström scheme is also stable for node distributions
with lower regularity. Table~\ref{tab:stability-halton} reports
the same information as Table~\ref{tab:stability-advfront}, with the only
difference that rejection sampling based on Halton points is used
instead of the advancing front method. A typical node distribution produced
by rejection sampling of Halton points is shown in Figure~\ref{fig:test-domains-c},
see Section~\ref{ssec:decoupled-meshless} for more details.
As expected, nodes with lower regularity lead to larger condition numbers
and stability constants, even though all quantities related to stability
of the method nevertheless remain bounded in Table~\ref{tab:stability-halton}
for $h \to 0$. Accuracy in the case of Halton points is comparable to
advancing front, if not even greater, with the exception of $h_X = 0.02$.

\begin{table}[tbp]
\caption{Decoupled Nyström on nodes given by rejection sampling
of Halton points. Fredholm integral equation \eqref{eq:fie} with $\lambda=2$ and
Gaussian kernel with $\sigma=0.1$ over domain $\Omega_2$.
Franke function as manufactured solution.}
\label{tab:stability-halton}
\centering
\begin{small}
\pgfplotstabletypeset[
	columns={
    hX,hY,NX,NY,condA,stabw,stabR,errLinf,errRMS,errL2
	}]
{test-stability-halton.csv}
\end{small}
\end{table}

Going back to the hypothesis that the numerical errors in
Figure~\ref{fig:test-choice-qW} are saturated by quadrature errors
and not by the size of the solution set $X_{\h}$ or the accuracy
of the reconstruction scheme, we have tried lowering $q_R$ while
keeping $q_w = 6$ and choosing all the other parameters
(including the domain) as in the tests of Figure~\ref{fig:test-choice-qW}.
The outcome, shown in Figure~\ref{fig:test-choice-qR-1}, is that
reconstruction order can be reduced heavily before the numerical errors
start to be affected, further supporting our interpretation based on
estimate~\eqref{eq:errestdec}. Looking at the values of $h$ where the lines
corresponding to $q_R = 2,3,4$ split from $q_R = 5,6$, we can infer
that quadrature and reconstruction terms in the error are roughly balanced
around $h = 0.08$ for $q_R = 2$, around $h = 0.04$ for $q_R = 3$,
and around $h = 0.02$ for $q_R = 4$. For smaller values of $h$,
the reconstruction error (with lower order of convergence)
dominates the quadrature error (with higher order of convergence).
More insight on the convergence regime where reconstruction errors
dominate quadrature errors is given by Figure~\ref{fig:test-choice-qR-2},
where we consider all possible $q_w$ from 2 to 6 while keeping $q_R = 2$.
As expected, the numerical error is saturated by reconstruction accuracy
and converges to zero quadratically even for $q_w > 2$.

\begin{figure}[tbp]
\centering
\hspace*{\fill}
\subcaptionbox{Fixed quadrature order $q_w = 6$,
variable reconstruction order $q_R$.
\label{fig:test-choice-qR-1}}{
\includegraphics[width=0.4\textwidth]{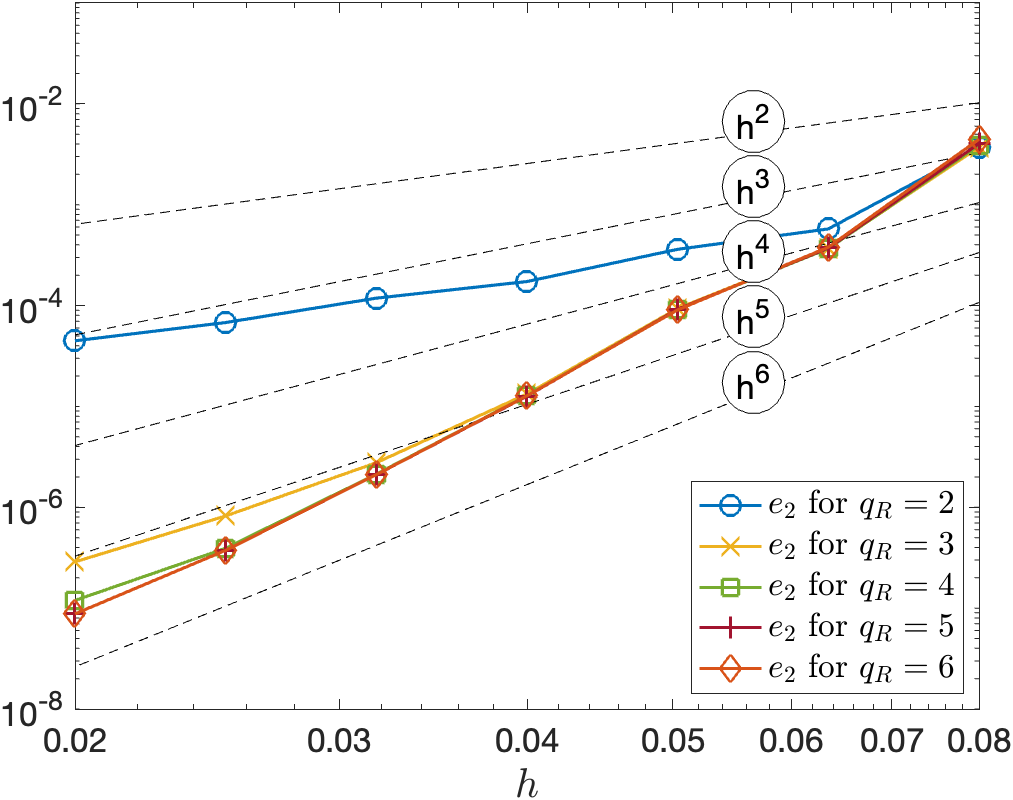}
} \hspace*{\fill}
\subcaptionbox{Fixed reconstruction order $q_R = 2$,
variable quadrature order $q_w$.
\label{fig:test-choice-qR-2}}{
\includegraphics[width=0.4\textwidth]{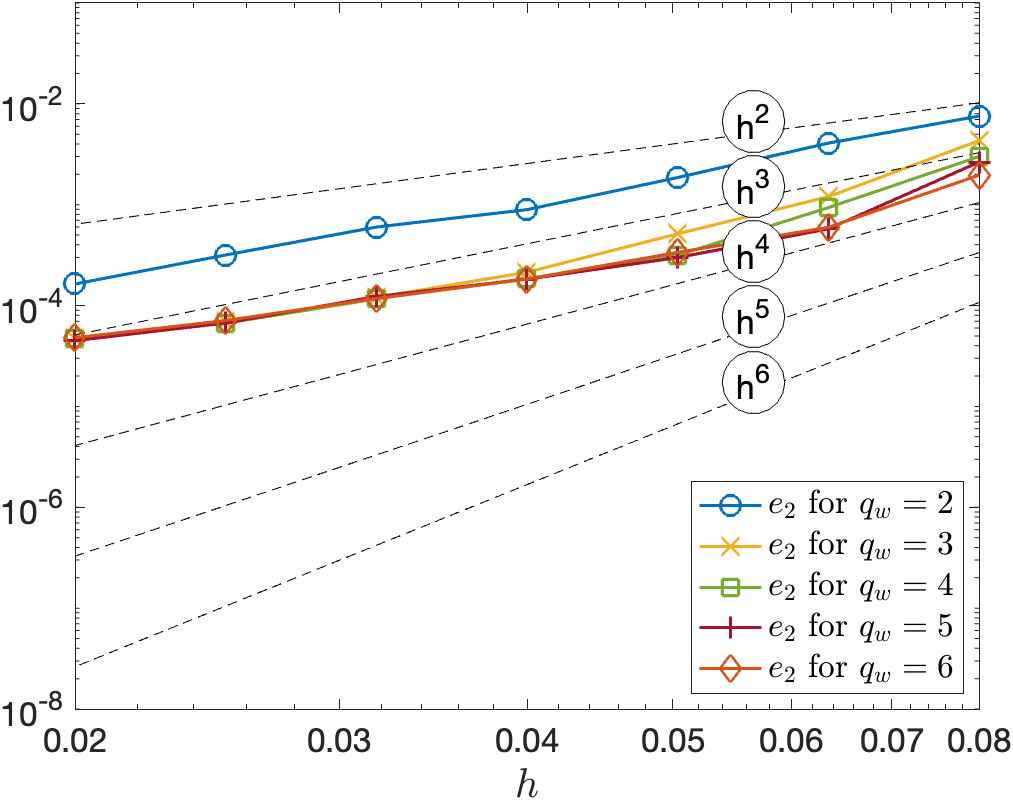}
} \hspace*{\fill}
\caption{Decoupled Nyström method with $h_X = h$ and
    $h_Y = h/\sqrt{2}$. Fredholm integral equation
    \eqref{eq:fie} with $\lambda=2$ and Gaussian kernel
    with $\sigma=0.1$ over a Cassini oval. Exact solution is
    Franke's function.}
\label{fig:test-choice-qR}
\end{figure}

At this point, two issues naturally arise. First, we need to determine how
much $h_Y$ can be reduced for a fixed $h_X$ before the numerical error
is dominated by the reconstruction term, meaning that $X_{\h}$ is too
coarse compared to $Y_{\h}$.
Second, we want to evaluate the computational efficiency gained in
a Nyström scheme by decoupling $X_{\h}$ from $Y_{\h}$.
We address these points in the next section.

\subsection{On the choice of decoupling ratio}
The theoretical analysis in the proof of Theorem~\ref{theo:nystrom-decoupled}
and the numerical experiments performed in Section~\ref{ssec:validation-convergence-order}
support one of the key conclusions in our work, namely
that the numerical error of a decoupled Nyström scheme can be
understood as the sum of a quadrature term and a reconstruction term.
We have seen that, whenever one of the two terms dominates
the other, a further reduction of the dominated term brings no benefit,
but only the overhead of additional computation. Equivalently,
we could say the increase of the dominated term reduces the overall
computational cost of the Nyström scheme without any significant loss in accuracy.
For this reason, a decoupled Nyström scheme is just as efficient
as a classical one whenever the optimal decoupling ratio
$\gamma^*$ as defined in \eqref{eq:gamma-star} is close to 1,
and can be much more efficient for problems where $\gamma^* \gg 1$.
It follows from \eqref{eq:gamma-star} that $\gamma^* \gg 1$ whenever
\[
\norm{\norm{k(x,y)}_{C^{q}(\Omegabar), y}}_{\infty, x}
\gg \norm{k}_{C^0(\Omegabar \times \Omegabar)},
\]
which is typical of Fredholm integral equations with a narrow
kernel $k$, such as Gaussian kernels with small $\sigma$.

Since the optimal decoupling ratio $\gamma^*$ is hard to estimate
a priori but is independent of $h_X$ and $h_Y$,
we can approximate it for a particular problem
using coarse sets $X_{\h}$ and $Y_{\h}$, where numerical solutions
can be computed quickly, and then use this guess for finer node sets,
where the solution can be found with the full required accuracy.
To verify this claim, we have plotted relative $L^2$ errors
as functions of decoupling ratios $\gamma$ in
Figure~\ref{fig:test-find-gamma}. In each subfigure, the two curves
are associated with different sets of solution nodes $X_{\h}$:
the top curve with coarse nodes generated by the advancing front
method with $h_X = 0.08$, and the bottom curve with finer nodes
with $h_X = 0.04$.

As $\gamma$ gets larger, the numerical errors converge to a finite
value, which is essentially the reconstruction error (since the
quadrature error becomes negligible). The values of $\gamma$ that
balance the quadrature and reconstruction errors correspond
to the points in the graphs where the curves level off: approximately
$\gamma = 2$ for $\sigma = 0.2$, $\gamma = 3$ for $\sigma = 0.1$,
and $\gamma = 5$ for $\sigma = 0.05$. Note how the gap between the
two curves remains constant in logarithmic scale, which confirms that
a nearly optimal decoupling ratio $\gamma^*$ can be found on a coarse
set of nodes, once the Fredholm integral equation has been fixed,
in particular its kernel~$k$. This empirically verifies that
$\gamma^*$ depends on $\sigma$ but not on $h_X$, as predicted
by~\eqref{eq:gamma-star}.

\begin{figure}[tbp]
\centering
\hspace*{\fill}
\subcaptionbox{Gaussian kernel with $\sigma=0.2$.
\label{fig:test-find-gamma-3}}{
\includegraphics[width=0.3\textwidth]{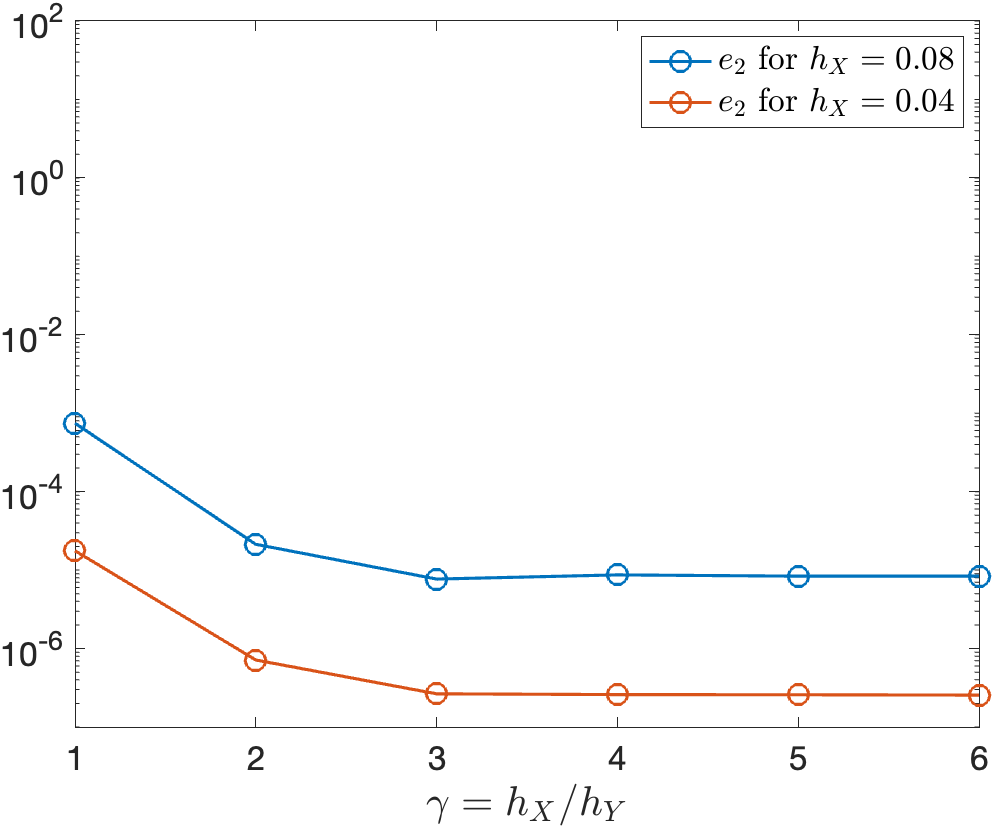}
} \hspace*{\fill}
\subcaptionbox{Gaussian kernel with $\sigma=0.1$.
\label{fig:test-find-gamma-2}}{
\includegraphics[width=0.3\textwidth]{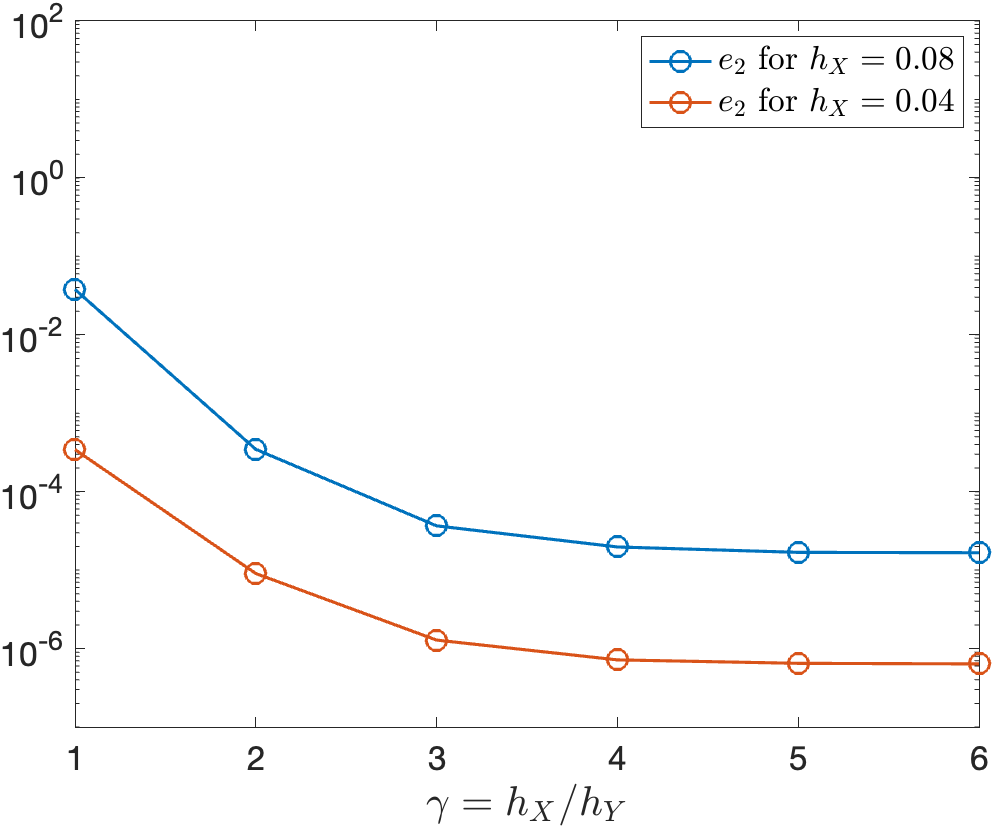}
} \hspace*{\fill}
\subcaptionbox{Gaussian kernel with $\sigma=0.05$.
\label{fig:test-find-gamma-1}}{
\includegraphics[width=0.3\textwidth]{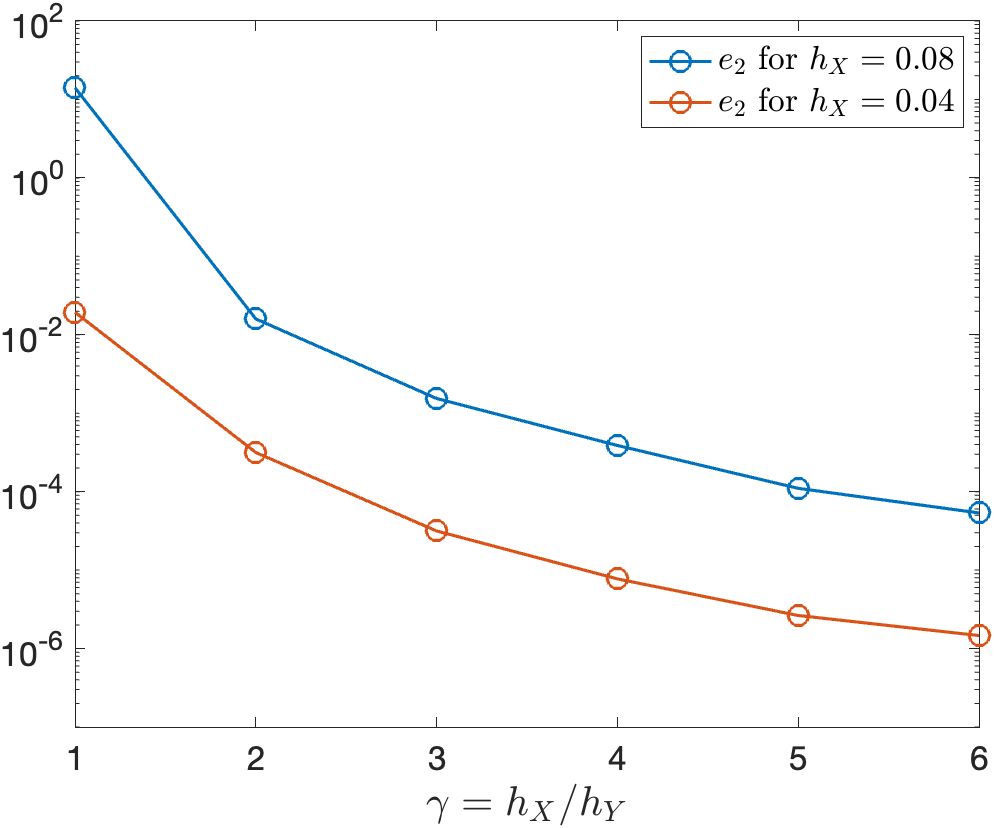}
} \hspace*{\fill}
\caption{Fredholm integral equation~\eqref{eq:fie} with $\lambda=2$
over the Cassini oval $\Omega_1$. Franke's function as manufactured solution.
Relative $L^2$ errors as a function of the decoupling ratio $\gamma$
for $q = q_w = q_R = 4$.}
\label{fig:test-find-gamma}
\end{figure}

A decoupling ratio $\gamma$ may be optimal in the sense
of~\eqref{eq:gamma-star},
but in practice the best choice of $\gamma$ is the one
that minimizes the product of the error
of the decoupled Nyström scheme in some norm and the time required
to compute the numerical solution, including not only the cost
of assembling and solving system~\eqref{eq:nystrom-decoupled},
but also the cost of computing the node sets $X_{\h}, Y_{\h}$,
the reconstruction matrices $R_{\h}$ and
the quadrature weights $\w_{\h}$.
In the following, we refer to the inverse of this product
as the \emph{efficiency} of the Nyström scheme.

Figure~\ref{fig:test-choice-hYhX-efficiency} displays relative
$L^2$ errors as functions of computation time, excluding from
the runtime only the cost of evaluating the manufactured
right-hand side $f$ (which relies on Chebfun and adaptive Gaussian
quadrature on $\partial\Omega$, as explained at the beginning of
Section~\ref{sec:numerical-experiments}).
As before, the Fredholm integral equation~\eqref{eq:fie}
with $\lambda=2$ is solved over the Cassini oval $\Omega_1$
choosing Franke's function as the manufactured solution.
Seven values of $h_X$ uniformly spaced over $[0.01,0.08]$
in logarithmic scale are used, even though not all possible
combinations of $h_X$ and $\gamma$ are included in the plots
to reduce clutter and out-of-range values.
For the considered problems with $\sigma = 0.1$ or $\sigma=0.05$,
our decoupled meshless Nyström scheme is one to two orders of magnitude
more efficient than the classical Nyström scheme with $\gamma=1$ and $X_{\h} = Y_{\h}$.
The optimal choice of $\gamma$ from the point of view of efficiency
is reasonably close to the estimate of $\gamma^*$ that can be obtained
using coarse node sets. Note that refinement of quadrature nodes eventually
leads to diminishing returns, even for $\gamma$ smaller than $\gamma^*$:
Figure~\ref{fig:test-find-gamma-1} with $\sigma=0.05$ suggests that
there is something to be gained by considering decoupling ratios
$\gamma > 3$, but the efficiency analysis in
Figure~\ref{fig:test-choice-hYhX-efficiency-2} actually shows that the
increase in accuracy is almost entirely offset by the increase in computation time.
Clearly, this depends on the level of optimization of quadrature-related
code compared to the rest.

Aside from external libraries such as NodeGenLib, our code is written in MATLAB R2024b
and the execution time was measured on a MacBook Air with Apple M1 processor
and 8GB of RAM. To reduce the impact of thermal throttling,
a warmup run was executed before collecting performance data.
The dense linear systems were solved using MATLAB's backslash operator.
The last tests in the $\gamma=1$ lines are the only ones
with $h_X = 0.01$, which lead to a solution set $X_{\h}$ of size
20689 and a linear system of size $20689 \times 20689$ to be solved:
storing the system matrix alone in double precision requires more than 3GB of memory.
The sharp decrease in efficiency for these tests is therefore
due to excessive memory usage, which causes swapping, i.e.~the transfer
of working memory from RAM to permanent storage and back.

\begin{figure}[tbp]
\centering
\hspace*{\fill}
\subcaptionbox{Gaussian kernel with $\sigma=0.1$.
\label{fig:test-choice-hYhX-efficiency-1}}{
\includegraphics[width=0.4\textwidth]{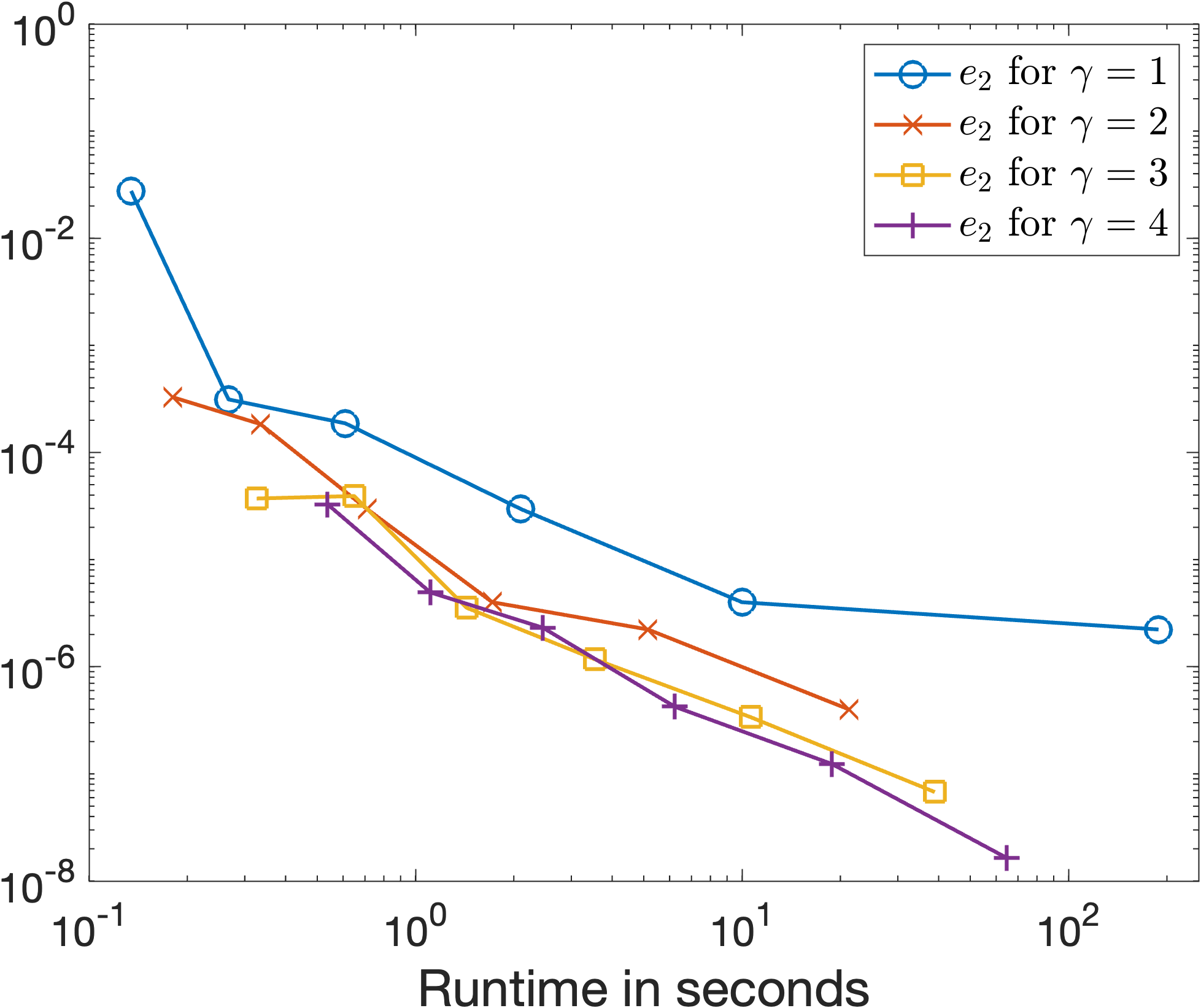}
} \hspace*{\fill}
\subcaptionbox{Gaussian kernel with $\sigma=0.05$.
\label{fig:test-choice-hYhX-efficiency-2}}{
\includegraphics[width=0.4\textwidth]{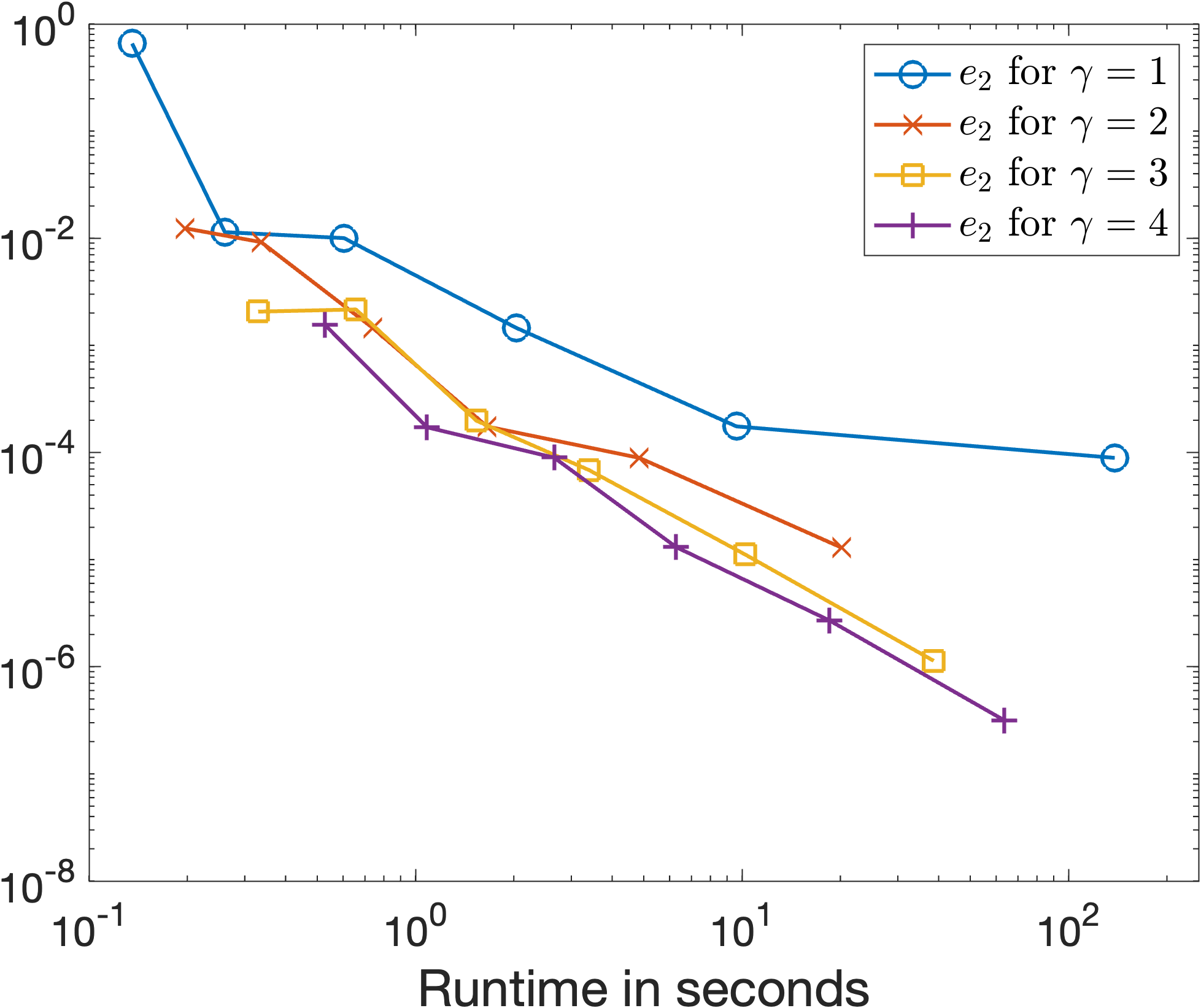}
} \hspace*{\fill}
\caption{Fredholm integral equation~\eqref{eq:fie} with $\lambda=2$
over the Cassini oval $\Omega_1$. Franke's function as manufactured solution.
Relative $L^2$ errors as a function of total computation time
for $q = q_w = q_R = 4$.}
\label{fig:test-choice-hYhX-efficiency}
\end{figure}

\section{Conclusion and future work}
\label{sec:conclusion}

In this paper, we have generalized the classical Nyström method
for the solution of~\eqref{eq:fie} by decoupling the set of solution nodes
from the set of quadrature nodes used to discretize the integral
operator $\Kcal$ with a smooth kernel $k$.
The combined use of local RBF interpolation with polyharmonic splines
and meshless moment-free quadrature formulas reduces the complexity
of the implementation by relying on the same building block,
namely the numerical differentiation formulas~\eqref{eq:differentiation-formula},
and leads to a meshless high-order method suitable for arbitrarily
shaped domains that is stable even for irregular distributions of nodes.
The domain $\Omega_2$ in Figure~\ref{fig:test-domains} is among the
most complex on which a meshless Nyström method has been tested in the literature.

Our decoupled scheme was found to be significantly more efficient
than the classical Nyström method, especially for problems with narrow kernels.
We have shown through theoretical error estimates and targeted
numerical experiments that the error of our new scheme can be
understood as the sum of a reconstruction error that scales with
the density of solution nodes and a quadrature error that scales
with the density of quadrature nodes.
In practice, we recommend balancing the orders of reconstruction
and quadrature ($q_R = q_w$) and tuning the decoupling ratio $\gamma$
by a parameter sweep using coarse sets of nodes.
We stress that our method, just like the classical Nyström scheme,
produces an approximation of the solution everywhere in~$\Omega$
via the Nyström interpolation formula~\eqref{eq:interp-decoupled}.
To validate convergence rates, we have introduced a novel technique
to construct manufactured solutions on piecewise smooth 2D domains,
assuming that a parametric description of their boundaries is known.

While this study establishes the foundation for decoupled Nyström
schemes, further research is needed to fully understand the
opportunities and advantages offered by this generalization.
For example, now that the set of quadrature nodes $Y_{\h}$
has been decoupled from the solution nodes $X_{\h}$,
our Nyström scheme could be extended to singular kernels by locally
refining $Y_{\h}$ around each element of $X_{\h}$.
Fredholm integral equations \eqref{eq:fie} often arise in applications
as equilibrium states of more complex integro-differential models,
possibly including nonlinear terms. An extension of our numerical
method to such models is the subject of ongoing work.
Since the spectrum of $\Kcal$ is also of interest in some applications,
it would be worth investigating how well it can be approximated
by the eigenvalues of the matrix $K_{\h} W_{\h} R_{\h}$.
Note that the operator $\Kcal$ amounts to a convolution with the kernel $k$,
so its discretization can be used to filter scattered data over
an arbitrary bounded domain. In the case of Gaussian kernels,
one obtains a family of low-pass filters indexed by the smoothing
parameter $\sigma > 0$.

In this study, numerical experiments were only performed in 2D,
but all the techniques, including the use of moment-free meshless
quadrature formulas, readily generalize to three-dimensional
domains and surfaces embedded in $\R^3$.
Investigating the behavior of our decoupled Nyström scheme
in higher dimensions would therefore be another logical next step.
Finally, one could investigate the compressibility of $A_{\h}$
using low-rank methods, as well as the use of iterative solvers
for the linear system~\eqref{eq:nystrom-decoupled},
and whether the effectiveness of these techniques depends
on the decoupling ratio $\gamma$.

\section*{Acknowledgements}
The authors acknowledge the contribution of the National Recovery
and Resilience Plan, Mission 4 Component 2 - Investment 1.4 - NATIONAL
CENTER FOR HPC, BIG DATA AND QUANTUM COMPUTING - funded by the
European Union - NextGenerationEU - (CUP B83C22002830001).
The authors are members of the
Gruppo Nazionale Calcolo Scientifico - Istituto
Nazionale di Alta Matematica (GNCS-INdAM).
The INdAM support through GNCS 2025 project with CUP E53C24001950001
and GNCS 2026 project with CUP E53C25002010001 is gratefully acknowledged. 

\appendix
\numberwithin{equation}{section}
\section{Appendix on continuously differentiable functions}
\label{sec:appendix-functional-analysis}

Let $\Omega$ be an open and bounded set in $\R^d$.
For any integer $q \geq 0$
we define $C^q(\Omega)$ as the space of functions
over $\Omega$ that are continuously differentiable
up to order $q$. The derivatives of functions
$f \in C^q(\Omega)$ are conveniently described using
multi-index notation: for any $d$-tuple of non-negative
integers
\[
\alpha = (\alpha_1, \ldots, \alpha_d),
\]
the operator $\partial^\alpha f$ denotes the mixed
partial derivative obtained by differentiating
$f$ $\alpha_i$ times with respect to the $i$-th variable,
for each $i = 1, \ldots, d$. In total, this involves
$\abs{\alpha} \deq \alpha_1 + \cdots + \alpha_d$ derivatives,
and so the expression $\partial^\alpha f$ with
$f \in C^q(\Omega)$ is meaningfully defined for any multi-index
$\alpha$ such that $\abs{\alpha} \leq q$.
The sum and difference of multi-indices are defined
componentwise, although the difference $\alpha-\beta$
is only well defined if $\beta_i \leq \alpha_i$
for all $i = 1,\dots,d$, a condition which we denote
by $\beta \leq \alpha$ and that defines a partial order
on the set of multi-indices.
The factorial and binomial coefficient are defined
for multi-indices as
\[
\alpha!
= \prod_{i=1}^d \alpha_i !,
\qquad
\binom{\alpha}{\beta}
= \frac{\alpha!}{\beta!(\alpha-\beta)!}
\text{ with $\beta \leq \alpha$,}
\]
and they allow the multivariate Leibniz rule
to be elegantly stated as
\[
\partial^\alpha (fg)
= \sum_{\beta \leq \alpha} \binom{\alpha}{\beta}
\, \partial^\beta f \; \partial^{\alpha-\beta} g.
\]
By requiring each mixed partial derivative $\partial^\alpha f$
to be continuous up to the boundary of $\Omega$,
spaces of continuously differentiable functions
can be given the structure of a Banach space.

\begin{definition}
The space $C^q(\Omegabar)$ of continuously differentiable
functions up to order $q$ and up to the boundary
of $\Omega$ is defined as
\[
C^q(\Omegabar)
= \left\{ f \in C^q(\Omega) \mid \text{$\partial^\alpha f$
extends continuously to $\Omegabar$ for all
$\abs{\alpha} \leq q$} \right\}.
\]
\end{definition}

\begin{proposition}
For any $f \in C^q(\Omegabar)$, the expression
\begin{equation} \label{eq:Cq-norm}
\norm{f}_{C^q}
\deq \sum_{\abs{\alpha} \leq q} \frac{1}{\alpha!}
    \norm{\partial^\alpha f}_\infty
= \sum_{\abs{\alpha} \leq q} \frac{1}{\alpha!}
\max_{x \in \Omegabar} \abs{\partial^\alpha f(x)}
\end{equation}
defines a norm, and makes $C^q(\Omegabar)$ a Banach space.
\end{proposition}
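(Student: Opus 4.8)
The plan is to treat the two assertions separately --- that \eqref{eq:Cq-norm} defines a norm, and that $C^q(\Omegabar)$ is then complete under it --- the first being routine and the second carrying the content. For the norm axioms: $\norm{f}_{C^q}$ is a finite sum of nonnegative terms, hence nonnegative; isolating the contribution of the multi-index $\alpha = 0$ shows that $\norm{f}_{C^q} = 0$ forces $\norm{f}_\infty = 0$, so $f \equiv 0$; and absolute homogeneity together with the triangle inequality pass term by term from $\norm{\cdot}_\infty$, using the linearity of each operator $\partial^\alpha$.

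For completeness I would argue by induction on $q$. The base case $q = 0$ is the classical completeness of $(C^0(\Omegabar), \norm{\cdot}_\infty)$: a uniformly Cauchy sequence on the compact set $\Omegabar$ converges uniformly to a continuous limit. It is worth noting once that every function manipulated below is continuous on the compact set $\Omegabar$, so all the Cauchy and convergence statements below are statements about uniform convergence on $\Omegabar$; the limits produced are then automatically continuous on $\Omegabar$, and the only point that will require a genuine argument is the identification of such a limit with a partial derivative of $f$ on the open set $\Omega$.

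For the inductive step, assume the claim for $q - 1$ and let $(f_n)$ be Cauchy in $C^q(\Omegabar)$. Comparing the defining sums termwise gives $\norm{\cdot}_{C^{q-1}} \le \norm{\cdot}_{C^q}$, so $(f_n)$ is Cauchy in $C^{q-1}(\Omegabar)$ and, by the inductive hypothesis, converges there to some $f \in C^{q-1}(\Omegabar)$; in particular $f_n \to f$ uniformly. For each coordinate direction $i$, writing $e_i$ for the $i$-th unit multi-index and using $(\beta + e_i)! = (\beta_i + 1)\,\beta!$ with $\beta_i + 1 \le q$ whenever $\abs{\beta} \le q-1$, one gets $\norm{\partial_i f_n - \partial_i f_m}_{C^{q-1}} \le q\,\norm{f_n - f_m}_{C^q}$, so $(\partial_i f_n)_n$ is Cauchy in $C^{q-1}(\Omegabar)$ and converges there to some $g_i \in C^{q-1}(\Omegabar)$.

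The heart of the argument is the identification $g_i = \partial_i f$, which I would obtain from the classical theorem on differentiating uniform limits: if $f_n \to f$ and $\partial_i f_n \to g_i$ uniformly on an open set, then $\partial_i f$ exists and equals $g_i$. Since $\Omega$ is open, this reduces to the one-variable version applied along coordinate segments contained in balls inside $\Omega$. Then $\partial_i f = g_i \in C^{q-1}(\Omegabar)$ for every $i$, which is exactly the statement $f \in C^q(\Omegabar)$; and the reverse factorial comparison ($\alpha! \ge \beta!$ when $\alpha = \beta + e_i$) yields $\norm{f_n - f}_{C^q} \le \norm{f_n - f}_\infty + \sum_{i=1}^d \norm{\partial_i f_n - \partial_i f}_{C^{q-1}} \to 0$, so $f_n \to f$ in $C^q(\Omegabar)$ and the induction closes. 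I expect the only genuinely nontrivial ingredient to be the differentiation-under-uniform-limits theorem and its multivariate, up-to-the-boundary bookkeeping; the remaining steps are elementary manipulations of the defining sum \eqref{eq:Cq-norm}.
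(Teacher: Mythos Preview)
Your proof is correct, but it takes a different route from the paper. The paper's own proof is a two-line reduction: the standard (unweighted) norm $\sum_{\abs{\alpha}\le q}\norm{\partial^\alpha f}_\infty$ is known to make $C^q(\Omegabar)$ a Banach space (with a citation to Evans), and inserting the strictly positive weights $1/\alpha!$ yields an equivalent norm, which preserves completeness. You instead give a self-contained inductive argument, essentially reproving the textbook result rather than citing it: Cauchy in $C^q$ implies Cauchy in $C^{q-1}$ for both $(f_n)$ and each $(\partial_i f_n)$, and differentiation under uniform limits identifies the limiting derivatives. The factorial bookkeeping you carry out (the two inequalities comparing $\norm{\cdot}_{C^q}$ with $\norm{\cdot}_\infty + \sum_i \norm{\partial_i\,\cdot\,}_{C^{q-1}}$) is precisely the norm equivalence that the paper isolates later as Lemma~\ref{lemma:equiv-norms-Cq}, so your argument effectively absorbs that lemma. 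Your approach buys self-containment and makes the role of the uniform-limit differentiation theorem explicit; the paper's approach is shorter and cleanly separates the weight issue from the underlying completeness result.
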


\begin{proof}
The statement is typically proved without the
weights $1/\alpha!$, see for example \cite{evans2022partial}.
However, it is clear that including the weights
defines an equivalent norm, because the weights
are strictly positive.
\end{proof}

In the norm definition in (\ref{eq:Cq-norm}) we include the
weights $1/\alpha!$ to ensure to get a submultiplicative norm,
a property which is proved in the next proposition and that we
need in the proof of Theorem~\ref{theo:nystrom-decoupled}.

\begin{proposition} \label{prop:submult}
For any $f,g \in C^q(\Omegabar)$, we have
\[
\norm{fg}_{C^q} \leq \norm{f}_{C^q} \norm{g}_{C^q}.
\]
\end{proposition}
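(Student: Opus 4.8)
The plan is to reduce everything to the multivariate Leibniz rule stated just above the proposition, together with the elementary identity $\frac{1}{\alpha!}\binom{\alpha}{\beta} = \frac{1}{\beta!\,(\alpha-\beta)!}$, which is immediate from the definition of the multi-index binomial coefficient. First I would apply the Leibniz rule to each mixed derivative $\partial^\alpha(fg)$ with $\abs{\alpha} \leq q$, take the $\infty$-norm, and use the triangle inequality together with the pointwise bound $\norm{\partial^\beta f\,\partial^{\alpha-\beta}g}_\infty \leq \norm{\partial^\beta f}_\infty \norm{\partial^{\alpha-\beta}g}_\infty$. Multiplying by the weight $1/\alpha!$ and summing over $\abs{\alpha}\leq q$, the binomial coefficients are absorbed into the factorial weights, leaving
\[
\norm{fg}_{C^q} \leq \sum_{\abs{\alpha}\leq q}\;\sum_{\beta\leq\alpha}
\frac{1}{\beta!\,(\alpha-\beta)!}\,\norm{\partial^\beta f}_\infty\,\norm{\partial^{\alpha-\beta}g}_\infty.
\]

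Next I would reindex the double sum by setting $\gamma = \alpha-\beta$. As $\alpha$ runs over all multi-indices with $\abs{\alpha}\leq q$ and, for each such $\alpha$, $\beta$ runs over all multi-indices with $\beta\leq\alpha$, the pair $(\beta,\gamma)$ runs bijectively over all pairs of multi-indices with $\abs{\beta}+\abs{\gamma}\leq q$, since $\abs{\beta}+\abs{\gamma}=\abs{\alpha}$; the inverse substitution is $(\beta,\gamma)\mapsto(\beta+\gamma,\beta)$. Hence the right-hand side above equals $\sum_{\abs{\beta}+\abs{\gamma}\leq q}\frac{1}{\beta!\,\gamma!}\norm{\partial^\beta f}_\infty\norm{\partial^\gamma g}_\infty$. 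Finally, since every summand is nonnegative and $\{(\beta,\gamma):\abs{\beta}+\abs{\gamma}\leq q\}\subseteq\{(\beta,\gamma):\abs{\beta}\leq q,\ \abs{\gamma}\leq q\}$, enlarging the index set only increases the sum, and the enlarged sum factors as
\[
\Bigl(\sum_{\abs{\beta}\leq q}\tfrac{1}{\beta!}\norm{\partial^\beta f}_\infty\Bigr)
\Bigl(\sum_{\abs{\gamma}\leq q}\tfrac{1}{\gamma!}\norm{\partial^\gamma g}_\infty\Bigr)
= \norm{f}_{C^q}\,\norm{g}_{C^q},
\]
which is the claimed inequality.

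There is no genuine obstacle here; the only point requiring a little care is the reindexing step, i.e. verifying that $(\alpha,\beta)\mapsto(\beta,\alpha-\beta)$ really is a bijection between $\{(\alpha,\beta):\abs{\alpha}\leq q,\ \beta\leq\alpha\}$ and $\{(\beta,\gamma):\abs{\beta}+\abs{\gamma}\leq q\}$, and that the passage from the constraint $\abs{\beta}+\abs{\gamma}\leq q$ to the product constraint $\abs{\beta}\leq q,\ \abs{\gamma}\leq q$ is an inequality in the correct direction — which it is, precisely because the weights $1/(\beta!\,\gamma!)$ and the norms are nonnegative. This last observation is exactly why the weights $1/\alpha!$ were built into the definition of $\norm{\cdot}_{C^q}$: without them the combinatorial factor $\binom{\alpha}{\beta}$ would not collapse into the separable form $\frac{1}{\beta!\,\gamma!}$, and submultiplicativity would fail for this simple choice of constants.
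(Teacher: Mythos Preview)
Your proposal is correct and follows exactly the same approach as the paper's proof: Leibniz rule plus triangle inequality, the identity $\tfrac{1}{\alpha!}\binom{\alpha}{\beta}=\tfrac{1}{\beta!(\alpha-\beta)!}$, the reindexing $\gamma=\alpha-\beta$, and then enlarging the index set $\{\abs{\beta}+\abs{\gamma}\leq q\}$ to the product set $\{\abs{\beta}\leq q\}\times\{\abs{\gamma}\leq q\}$ using nonnegativity of the summands.
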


\begin{proof}
By the multivariate Leibniz rule and the triangle inequality,
\begin{align*}
\norm{fg}_{C^q}
&= \sum_{\abs{\alpha} \leq q}
    \frac{1}{\alpha!} \norm{\partial^\alpha(fg)}_\infty
= \sum_{\abs{\alpha} \leq q} \frac{1}{\alpha!} \normBig{
    \sum_{\beta \leq \alpha} \binom{\alpha}{\beta}
    \, \partial^\beta f \; \partial^{\alpha-\beta} g}_\infty \\
&\leq \sum_{\abs{\alpha} \leq q} \sum_{\beta \leq \alpha}
    \frac{1}{\beta! (\alpha-\beta)!}
    \norm{\partial^\beta f}_\infty
    \norm{\partial^{\alpha-\beta} g}_\infty.
\end{align*}
Summing over the multi-indices $\alpha$ and $\beta$
for $\abs{\alpha} \leq q$ and
$\beta \leq \alpha$ is equivalent to summing over the
multi-indices $\beta$ and $\gamma \deq \alpha-\beta$
for $\abs{\beta+\gamma} \leq q$, and so
\[
\sum_{\abs{\alpha} \leq q} \sum_{\beta \leq \alpha}
    \frac{1}{\beta! (\alpha-\beta)!}
    \norm{\partial^\beta f}_\infty
    \norm{\partial^{\alpha-\beta} g}_\infty
= \sum_{\beta,\gamma}^{\abs{\beta+\gamma} \leq q}
    \frac{1}{\beta! \gamma!}
    \norm{\partial^\beta f}_\infty
    \norm{\partial^\gamma g}_\infty.
\]
Estimating the sum from above by introducing the terms
with $\abs{\beta+\gamma} > q$ completes the proof:
\[
\sum_{\beta,\gamma}^{\abs{\beta+\gamma} \leq q}
    \frac{1}{\beta! \gamma!}
    \norm{\partial^\beta f}_\infty
    \norm{\partial^\gamma g}_\infty
\leq \sum_{\abs{\beta} \leq q} \sum_{\abs{\gamma} \leq q}
    \frac{1}{\beta! \gamma!}
    \norm{\partial^\beta f}_\infty
    \norm{\partial^\gamma g}_\infty
= \norm{f}_{C^q} \norm{g}_{C^q}. \qedhere
\]
\end{proof}

Relatively compact subsets of $C^0(\Omegabar)$
are characterized by the Arzelà-Ascoli theorem,
see Section~C.7 of~\cite{evans2022partial}.
Even though this result can be generalized to
spaces $C^q(\Omegabar)$ by extending the assumptions
of uniform boundedness and uniform equicontinuity to
all mixed partial derivatives up to order $q$,
we only recall in the next proposition the classical statement with $q = 0$,
which is sufficient for the proof of Theorem~\ref{theo:nystrom-decoupled}.

\begin{proposition} \label{prop:asc-arz}
A subset $\Fcal \subset C^0(\Omegabar)$
is relatively compact with respect to the $\infty$-norm
if and only if
\begin{enumerate}
\item Functions in $\Fcal$ are uniformly bounded: 
there exists $C_{\Fcal}$ such that
\[
\norm{f}_\infty \leq C_{\Fcal}
\quad \text{for all $f \in \Fcal$}.
\]
\item Functions in $\Fcal$ are uniformly equicontinuous:
for all $\varepsilon > 0$,
there exists $\delta_{\Fcal} = \delta_{\Fcal}(\varepsilon) > 0$ such that
\[
\abs{f(x)-f(x')} < \varepsilon
\quad \text{for all $f \in \Fcal$ and $x,x' \in \Omegabar$
such that $\norm{x-x'} < \delta_{\Fcal}$}.
\]
\end{enumerate}
\end{proposition}

The following lemmas lead to Proposition~\ref{prop:pointwise-conv-Cq},
which is in turn required by our proof of Theorem~\ref{theo:nystrom-decoupled}.

\begin{lemma} \label{lemma:density}
The space $C^q(\Omegabar)$ is a dense subset of $C^0(\Omegabar)$.
\end{lemma}
\begin{proof}
Any function $f \in C^0(\Omegabar)$ with $\Omega \subset \R^d$
can be extended to a function $\tilde{f} \in C^0(\R^d)$
such that
\[
\normbig{\tilde{f}}_{\infty,\R^d}
= \normbig{f}_{\infty,\Omegabar}
\]
by the Tietze extension theorem,
see \cite{munkres2000topology} for a reference.
Consider the following function $\mu \in C^{\infty}(\R^d)$,
known as \emph{standard mollifier},
\[
\mu(x) = \begin{cases}
    C_{\mu} \exp\left(\frac{1}{\norm{x}^2-1}\right)
    & \text{if $\norm{x} < 1$,} \\
    0
    & \text{if $\norm{x} \geq 1$,}
\end{cases}
\qquad \text{with constant $C_\mu > 0$ selected so that
$\int_{\R^d} \mu(x) \dx = 1$,}
\]
and define \emph{mollifiers} $\mu_\varepsilon$
by scaling $\mu$ by a factor $\varepsilon \in (0,1)$ and requiring again unit integral in $\R^d$.
Let $\tilde{f}_\varepsilon$ be the convolution of $\tilde{f}$
and $\mu_\varepsilon$:
\[
\mu_\varepsilon(x) \deq \varepsilon^{-d} \mu(x/\varepsilon),
\quad x \in \R^d,
\quad \tilde{f}_\varepsilon(x)
    \deq \int_{\R^d} \mu_\varepsilon(x-y) \tilde{f}(y) \dy.
\]
By the properties of mollifiers,
the functions $\tilde{f}_\varepsilon$ belong to
$C^\infty(\R^d)$ and, for $\varepsilon \to 0^+$, they 
converge uniformly to $\tilde{f}$
on every compact subset of $\R^d\,,$ 
see Section~C.4 in \cite{evans2022partial}. In particular,
they converge uniformly on $\Omegabar$, and this shows
that $C^q(\Omegabar)$ is a dense subset of $C^0(\Omegabar).$
\end{proof}

\begin{lemma} \label{lemma:strong-convergence}
A sequence $\{T_i\}_{i \in \N}$ of bounded linear operators
$T_i \colon C^0(\Omegabar) \to C^0(\Omegabar)$
converges strongly to a bounded linear operator
$T \colon C^0(\Omegabar) \to C^0(\Omegabar)$, which means that
\[
\lim_{i \to \infty} \norm{T_i f - T f}_{\infty} = 0
\quad \text{for all $f \in C^0(\Omegabar)$},
\]
if and only if
\begin{enumerate}
\item For all $f$ in a dense subset of $C^0(\Omegabar)$,
\[
\lim_{i \to \infty} \norm{T_i f - T f}_{\infty} = 0.
\]
\item There exists $C > 0$ such that $\norm{T_i} \leq C$ for all $i \in \N$.
\end{enumerate}
\end{lemma}

\begin{proof}
This is a particular case of Lemma~9.4.7 in \cite{eidelman2004functional},
where a proof is given for arbitrary Banach spaces.
\end{proof}

Combining the two lemmas, whenever we are dealing with a sequence
$\{T_i\}_{i \in \N}$ of bounded linear operators
$T_i \colon C^0(\Omegabar) \to C^0(\Omegabar)$
which are supposed to converge strongly to
$T \colon C^0(\Omegabar) \to C^0(\Omegabar)$ for $i \to \infty$,
the density of $C^q(\Omegabar)$ in $C^0(\Omegabar)$ can be used
to check convergence of $T_i$ only for functions in $C^q(\Omegabar)$,
provided that the operators $T_i$ are uniformly bounded.
We formulate this observation in the following proposition.

\begin{proposition} \label{prop:pointwise-conv-Cq}
For any integer $q \geq 0$, a sequence $\{T_i\}_{i \in \N}$
of bounded linear operators $T_i \colon C^0(\Omegabar) \to C^0(\Omegabar)$
converges strongly to a bounded linear operator
$T \colon C^0(\Omegabar) \to C^0(\Omegabar)$, which means that
\[
\lim_{i \to \infty} \norm{T_i f - T f}_{\infty} = 0
\quad \text{for all $f \in C^0(\Omegabar)$},
\]
if and only if
\begin{enumerate}
\item For all $f \in C^q(\Omegabar)$,
\[
\lim_{i \to \infty} \norm{T_i f - T f}_{\infty} = 0.
\]
\item There exists $C > 0$ such that $\norm{T_i} \leq C$ for all $i \in \N$.
\end{enumerate}
\end{proposition}

\section{Appendix on quadrature schemes}
\label{sec:appendix-quadrature}

A quadrature scheme $\{(Y_h,\w_h)\}_{h>0}$
that converges for all continuous functions,
i.e.~such that
\[
\lim_{h \to 0^+} \; \sum_{i=1}^{\abs{Y_h}} w_{h,i} f(y_{h,i})
= \int_{\Omega} f(y) \dy
\quad \text{for all $f \in C^0(\Omegabar)$}
\]
is necessarily stable,
in the sense that there exists a constant
$C_Q > 0$ such that
\[
\norm{\w_h}_1
\deq \sum_{i=1}^{\abs{Y_h}} \abs{w_{h,i}}
\leq C_Q
\quad \text{for all $h > 0$}.
\]
This is a well-known consequence of the uniform boundedness principle
\cite[Theorem A.3]{atkinson1997numerical}
applied to the family $\{\Qcal^0_h\}_{h>0}$
of bounded linear operators
\begin{equation} \label{eq:Qcal0h}
\Qcal^0_h \colon C^0(\Omegabar) \to \R,
\qquad \Qcal^0_h f = \sum_{i=1}^{\abs{Y_h}} w_{h,i} f(y_{h,i})
\quad \text{for all $f \in C^0(\Omegabar)$},
\end{equation}
because the operator norm $\norm{\Qcal^0_h}$
is equal to $\norm{\w_h}_1$.
The same argument, however, cannot be used to
infer stability of a quadrature scheme
$\{(Y_h,\w_h)\}_{h>0}$ of order $q > 0$
(see Definition~\ref{CQ}),
because the operator norm
of $\Qcal^q_h \colon C^q(\Omegabar) \to \R$
is not $\norm{\w_h}_1$ anymore, and can actually be
much smaller, as implied by the following example where it is shown that there
exist quadrature schemes of order $q > 0$ that are
not stable.

\begin{example} \label{ex:unstableqf}
The basic idea is to modify a stable quadrature
scheme of order $q > 0$ to make it unstable without
affecting its order of convergence.
On the closed interval $\Omegabar = [0,1]$,
consider the (right) rectangle rule $(Y_h,\w_h)$ defined
over $N \deq \left\lceil h^{-1} \right\rceil$
uniform subintervals:
\[
Y_h = \{y_i\}_{i=1}^N,
\quad y_i = i/N,
\quad w_i = 1/N.
\]
The quadrature scheme $\{(Y_h,\w_h)\}_{h > 0}$
is stable, because all weights are positive
and sum to one, and is known to have order $q = 1$,
i.e.~there exists $C_w > 0$ such that
\[
\abs{\int_0^1 f(y) \dy
- \sum_{i=1}^{N} \frac{1}{N} \: f\left(\frac{i}{N}\right)}
\leq C_w N^{-1} \norm{f}_{C^1([0,1])}
\quad \text{for all $f \in C^1([0,1])$}.
\]
We can now duplicate each node in the rectangle rule
to make it unstable without altering its order
of convergence. Consider the following
sets of nodes and weights:
\begin{gather*}
Y_h = \{y_i\}_{i=1}^{2N},
\quad y_i = \frac{i}{N} \text{ for $1 \leq i \leq N$},
\quad y_i = \frac{i-N}{N} - \frac{1}{N^2}
    \text{ for $N+1 \leq i \leq 2N$},\\
\w_h = \{w_i\}_{i=1}^{2N},
\quad w_i = 1+\frac{1}{N} \text{ for $1 \leq i \leq N$},
\quad w_i = -1 \text{ for $N+1 \leq i \leq 2N$}.
\end{gather*}
This modified rectangle rule is now unstable,
because $\norm{\w_h}_1 = 2N+1$. However, its rate
of convergence for functions in $C^1$ is not affected:
\begin{align*}
\abs{\int_0^1 f(y) \dy - \sum_{i=1}^{2N} w_i f(y_i)}
&= \abs{\int_0^1 f(y) \dy
    - \sum_{i=1}^{N} \left( 1 + \frac{1}{N} \right)
        f\left(\frac{i}{N}\right)
    - \sum_{i=N+1}^{2N} (-1) \:
        f\left(\frac{i-N}{N} - \frac{1}{N^2}\right)} \\
&\leq \abs{\int_0^1 f(y) \dy
    - \sum_{i=1}^{N} \frac{1}{N} f\left(\frac{i}{N}\right)}
+ \abs{ -\sum_{i=1}^N f\left(\frac{i}{N}\right)
    + \sum_{i=1}^N f\left(\frac{i}{N} - \frac{1}{N^2}\right)} \\
&\leq C_w N^{-1} \norm{f}_{C^1([0,1])}
+ \abs{ -\sum_{i=1}^N f\left(\frac{i}{N}\right)
    + \sum_{i=1}^N f\left(\frac{i}{N} - \frac{1}{N^2}\right)}
\end{align*}
By the mean value theorem, there exists
$\eta_i \in \left( \frac{i}{N} - \frac{1}{N^2},
    \frac{i}{N} \right)$
for all $i=1,\dots,N$ such that
\[
f\left(\frac{i}{N} - \frac{1}{N^2}\right)
= f\left(\frac{i}{N}\right) - f'(\eta_i) \frac{1}{N^2},
\]
from which it follows that
\begin{align*}
\abs{ -\sum_{i=1}^N f\left(\frac{i}{N}\right)
    + \sum_{i=1}^N f\left(\frac{i}{N} - \frac{1}{N^2}\right)}
= \abs{ -\sum_{i=1}^N f\left(\frac{i}{N}\right)
    +\sum_{i=1}^N f\left(\frac{i}{N}\right)
    -\sum_{i=1}^N f'(\eta_i) \frac{1}{N^2}}
\leq N^{-1} \norm{f}_{C^1([0,1])}.
\end{align*}
\end{example}

It follows immediately from this example
that a quadrature scheme of order $q > 0$
is not necessarily convergent for continuous
functions. More generally, it can be shown
that a quadrature scheme of order $q > 0$
might not converge for functions
in $C^{q'}(\Omegabar)$ with $q' < q$.
However, it turns out that the convergence for
continuous functions can be recovered under
the additional assumption of stability.
This is important because convergence for
continuous functions is a key assumption in
Theorem 4.1.2 of \cite{atkinson1997numerical},
on which our proof of
Proposition~\ref{prop:nystrom-classical} is based.

\begin{proposition} \label{prop:convcont}
Let $\{(Y_h,\w_h)\}_{h > 0}$ be a stable quadrature
scheme over a bounded domain $\Omega$ with order
of convergence $q > 0$. Then,
\[
\lim_{h \to 0^+} \; \sum_{i=1}^{\abs{Y_h}} w_{h,i} f(y_{h,i})
= \int_{\Omega} f(y) \dy
\quad \text{for all $f \in C^0(\Omegabar)$}.
\]
\end{proposition}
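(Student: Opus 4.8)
The plan is to reduce the claim to the two quantitative estimates at our disposal --- the order-$q$ bound of Definition~\ref{Cw} and the stability bound $\norm{\w_h}_1 \leq C_Q$ of Definition~\ref{CQ} --- by approximating the continuous integrand uniformly with a polynomial. Fix $f \in C^0(\Omegabar)$ and $\varepsilon > 0$. Since $\Omegabar$ is compact, the Weierstrass approximation theorem (a special case of Stone--Weierstrass) yields a polynomial $p$ with $\norm{f-p}_\infty < \varepsilon$. In particular $p \in C^q(\Omegabar)$, so the order-$q$ quadrature estimate applies to it, with $\norm{p}_{C^q(\Omegabar)}$ a fixed finite number once $p$ has been chosen.

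Next I would split the quadrature error by the triangle inequality, writing $\Qcal^0_h g = \sum_{i=1}^{\abs{Y_h}} w_{h,i} g(y_{h,i})$ as in the definition preceding the statement:
\[
\abs{\int_\Omega f(y)\dy - \Qcal^0_h f}
\leq \abs{\int_\Omega (f-p)(y)\dy}
+ \abs{\int_\Omega p(y)\dy - \Qcal^0_h p}
+ \abs{\Qcal^0_h(p-f)}.
\]
The first term is at most $\abs{\Omega}\,\norm{f-p}_\infty \leq \abs{\Omega}\,\varepsilon$, with $\abs{\Omega}$ the (finite) Lebesgue measure of $\Omega$. The third term is at most $\norm{\w_h}_1\,\norm{f-p}_\infty \leq C_Q\,\varepsilon$ by stability, and crucially this bound is uniform in $h$. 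The middle term is the order-$q$ quadrature error for the fixed polynomial $p$, hence bounded by $C_w\,h^q\,\norm{p}_{C^q(\Omegabar)}$; since the last factor no longer depends on $h$, there exists $h_0 > 0$ (depending on $\varepsilon$ through $p$) such that this term is $< \varepsilon$ for all $0 < h < h_0$.

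Combining the three bounds gives $\abs{\int_\Omega f(y)\dy - \Qcal^0_h f} < (\abs{\Omega} + C_Q + 1)\,\varepsilon$ for all $0 < h < h_0$, and since $\varepsilon > 0$ was arbitrary this is exactly the asserted limit. I do not expect a genuine obstacle here; the only point requiring care is the order of quantifiers --- one must fix $\varepsilon$, then the approximant $p$, and only afterwards send $h \to 0$, so that $\norm{p}_{C^q(\Omegabar)}$ stays constant while $h^q \to 0$. It is worth emphasizing that stability enters precisely in the bound on the third term: without the uniform estimate $\norm{\w_h}_1 \leq C_Q$ this contribution cannot be controlled, consistent with Example~\ref{ex:unstableqf}, which exhibits an order-$1$ scheme that fails to converge on $C^0(\Omegabar)$.
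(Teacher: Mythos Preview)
Your proof is correct and genuinely different from the paper's. Both arguments split the quadrature error into the same three pieces (integral of $f-g$, quadrature error on a smooth surrogate $g$, and quadrature of $g-f$ controlled by stability), but they differ in how the smooth surrogate is produced and in the order of limits. You invoke Stone--Weierstrass to pick a fixed polynomial $p$ once $\varepsilon$ is given, so $\norm{p}_{C^q(\Omegabar)}$ is a constant and the middle term dies with $h^q$; this is elementary and avoids any auxiliary machinery. The paper instead extends $f$ to $\R^d$ via Tietze, mollifies to obtain $\tilde f_\varepsilon \in C^\infty$, and then couples $\varepsilon$ to $h$ (taking $\varepsilon(h)=h^{1/2}$) to balance the blow-up of $\norm{\tilde f_\varepsilon}_{C^q}$ against $h^q$. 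Your route is shorter and needs only compactness of $\Omegabar$; the paper's mollifier approach is heavier but, because the approximation error $\norm{f-\tilde f_\varepsilon}_\infty$ is governed by the modulus of continuity of $f$, it is more readily upgraded to a quantitative convergence rate for H\"older-continuous $f$, should one want that.
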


\begin{proof}
Define the $h$-indexed family of bounded linear
operators $\Qcal^0_h$ as in \eqref{eq:Qcal0h},
and let $\Qcal^0$ be the bounded linear operator
\[
\Qcal^0 \colon C^0(\Omegabar) \to \R,
\qquad
\Qcal^0(f) \deq \int_{\Omega} f(y) \dy.
\]
By the definition of order of convergence of a
quadrature scheme,
\[
\lim_{h \to 0^+} \Qcal^0_h f = \Qcal^0 f
\quad \text{for all $f \in C^q(\Omegabar)$},
\]
and, by Lemma~\ref{lemma:density}, the space
$C^q(\Omegabar)$ is a dense subset of $C^0(\Omegabar)$.
Moreover, the operator norm of $\Qcal^0_h$
is equal to $\norm{\w_h}_1$, and so the stability
of the quadrature scheme is equivalent to the
uniform boundedness of the family $\{\Qcal^0_h\}_{h > 0}$.
By Lemma~\ref{lemma:strong-convergence} and the standard
sequential-criterion reduction (testing every sequence
$h_i \to 0^+$), the operators $\Qcal^0_h$ converge strongly
to $\Qcal^0$ as $h \to 0^+$, which means that
\[
\lim_{h \to 0^+} \Qcal^0_h f = \Qcal^0 f
\quad \text{for all $f \in C^0(\Omegabar)$}. \qedhere
\]
\end{proof}

\bibliographystyle{abbrv}
\bibliography{bibliography}

\end{document}